\numberwithin{equation}{subsection}
\let\oldsection\section
\renewcommand{\section}{
	\renewcommand{\theequation}{\thesection.\arabic{equation}}
	\oldsection}
\let\oldsubsection\subsection
\renewcommand{\subsection}{
	\renewcommand{\theequation}{\thesubsection.\arabic{equation}}
	\oldsubsection}
\newtheorem{theorem}{Theorem}[section]
\newtheorem{proposition}[theorem]{Proposition}
\newtheorem{lemma}[theorem]{Lemma}
\newtheorem{corollary}[theorem]{Corollary}
\newtheorem{definition}[theorem]{Definition}
\newtheorem{remark}[theorem]{Remark}
\newtheorem{problem}[theorem]{Problem}
\newcommand{\G}{\Gamma}
\newcommand{\PP}{{\mathcal P}}
\newcommand{\BB}{{\mathcal B}}
\newcommand{\R}{{\cal R}}
\title{Characterizing bipartite distance-regularized graphs with vertices of eccentricity 4}
\author{
	{Blas Fern\'andez}\\
	{\small Andrej Marušič Institute}\\
	{\small University of Primorska}\\
	{\small Muzejski trg 2, 6000 Koper, Slovenia }\\
	{\small blas.fernandez@famnit.upr.si} \and 
	{Marija Maksimović}\\
	{\small Faculty of Mathematics}\\
	{\small University of Rijeka}\\
	{\small Radmile Matejčić 2, 51000 Rijeka, Croatia }\\
	{\small mmaksimovic@math.uniri.hr} \and 
	{Sanja Rukavina}\\
	{\small Faculty of Mathematics}\\
	{\small University of Rijeka}\\
	{\small Radmile Matejčić 2, 51000 Rijeka, Croatia }\\
	{\small sanjar@math.uniri.hr}
}
\begin{document}

\maketitle

\begin{abstract}
The characterization of bipartite distance-regularized graphs, where some vertices have eccentricity less than four, in terms of the incidence structures of which they are incidence graphs, is known. In this paper we prove that there is a one-to-one correspondence between the incidence graphs of  quasi-symmetric SPBIBDs with parameters $(v,b,r,k, \lambda_1,0)$ of type $(k-1,t)$ with intersection numbers $x=0$ and $y>0$, where $0< y\leq t<k$ , and bipartite distance-regularized graphs with $D=D'=4$.
\end{abstract}

\noindent{\em Mathematics Subject Classifications: 05C75, 05B05 }

\noindent{\em Keywords: distance-regularized graph, bipartite graph, incidence graph, special partially balanced incomplete block design}

\section{Introduction}
\label{sec:intro}

We assume familiarity with the basic facts and notions from graph theory and from the theory of combinatorial designs.  For background reading we refer the reader to \cite{BCN, colbourn2007crc, Die, SS-QS}. In this paper, $\Gamma=(X, \R)$ will denote a finite, undirected, connected graph, without loops and multiple edges, with vertex set $X$ and edge set $\R$. 
An incidence structure ${\mathcal D} =( {\mathcal P}, {\mathcal B}, I)$,
with point set ${\mathcal P}$, block set ${\mathcal B}$ and incidence relation $I \subseteq {\mathcal P} \times {\mathcal B}$, 
where $|{\mathcal P}|=v$, $|{\mathcal B}|=b$, each block $B \in {\mathcal B}$ is incident with exactly $k$ points, 
every $t$-tuple of distinct points from $ {\mathcal P}$ is incident with exactly $\lambda$ blocks and each point is incident with exactly $\displaystyle r$ blocks is a {\it $t$-$(v,b,r,k,\lambda)$ design} or a {\it $t$-$(v,k,\lambda)$ design}.  We will only consider  {\it $t$-$(v, k, \lambda)$} designs that are simple, proper and nontrivial, and to rule out degenerate cases, we will assume that the parameters of a design satisfy $v>k>t\geq 1$ and $\lambda\geq 1$.

 Consider a graph $\Gamma=(X, \R)$, and, for any $x, y\in X$, denote by $\partial(x, y)$ the {\it distance} between $x$ and $y$ (the length of a shortest walk from $x$ to $y$). The {\it diameter of $\G$} is defined to be 
$\max\{\partial(u,v)\,|\,u, v\in X\},$ and the {\it eccentricity} of $x$, denoted by $\varepsilon=\varepsilon(x)$, is the maximum distance between $x$ and any other vertex of $\G$. Note that the diameter of $\G$ equals $\max\{\varepsilon(x)\mid x\in X\}$. For an integer $i$ we define  
$
\G_i(x)=\left\lbrace y \in X \mid \partial(x, y)=i\right\rbrace. 
$
Notice that $\G_{i}(x)$ is empty if and only if $i<0$ or $i>\varepsilon(x)$, and $\G_1(x)$ is the set of neighbours of $x$. We will abbreviate $\G(x)=\G_1(x)$. We say that a vertex $x\in X$ has {\it valency} $k$ if $|\G(x)|=k$. A graph $\G$ is called {\it regular} if every vertex has the same valency, i.e., if there is a non-negative integer $k$ such that $|\G(x)|=k$ for every vertex $x\in X$. In this case we also say that $\G$ is regular with {\it valency} $k$ or $k$-regular. 

 A {\it bipartite} (or {\it $(Y,Y')$-bipartite}) graph is a graph whose vertex set can be partitioned into two subsets $Y$ and $Y'$ such that each edge has one end in $Y$ and one end in $Y'$. The vertex sets $Y$ and $Y'$ in such a partition {are} called  {\it color partitions} (or {\it bipartitions}) of the graph. A bipartite graph $\G$ with color partitions $Y$ and $Y'$ is said to be {\it biregular} if the valency of a vertex only
 depends on the color partition where it belongs to; see for instance \cite{DKT}. 
 
 For vertices $x_1,x_2,\hdots,x_k\in X$ and non-negative integers $i_1,i_2,\hdots,i_k$ $(0\le i_1,i_2,\hdots,i_k \le d)$ we define
$
\G_{i_1,i_2,\hdots,i_k}(x_1,x_2,\hdots,x_k)=\bigcap_{\ell=1}^k \G_{i_\ell}(x_\ell).
$
Assume that $y \in \G_i(x)$ for some $0 \le i \le \varepsilon(x)$ and let $z$ be a neighbour of $y$. Then, by the triangle inequality,
\begin{equation}
\label{desigualdadtriangular}
\partial(x, z) \in \left\lbrace i-1, i, i+1 \right\rbrace ,
\end{equation}
and so $z \in \G_{i-1}(x) \cup \G_i(x) \cup \G_{i+1}(x)$.
For $y \in \G_i(x)$ we therefore define the following numbers:
$$
a_i(x,y)=\left|\G_i(x)\cap \G(y) \right|, \hspace{0.4cm} b_i(x,y)=\left|\Gamma_{i+1}(x)\cap \G(y) \right|, \hspace{0.4cm} c_i(x,y)=\left|\G_{i-1}(x)\cap \G(y) \right|. 
$$
We say that $x \in X$ is {{\it distance-regularized} (or that $\G$ is {{\it distance-regular around $x$}) if the numbers $a_i(x,y),  b_i(x,y)$ and $c_i(x,y)$ do not depend on the choice of $y \in \G_i(x) \; (0 \le i \le \varepsilon(x))$. In this case, the numbers $a_i(x,y),  b_i(x,y)$ and $c_i(x,y)$ are simply denoted  by $a_i(x),  b_i(x)$ and $c_i(x)$ respectively, and are called the {{\it intersection numbers of $x$}. Observe that if $x$ is distance-regularized and $\varepsilon(x)=d$, then $a_0(x)=c_0(x)=b_d(x)=0$, $b_0(x)= |\G(x)|$ and $c_1(x)=1$. Note also that for every $1\leq i \leq d$ we have that $b_{i-1}(x)>0$ and $c_i(x)>0$, and that $a_i(x)=0$ if $\G$ is bipartite. For convenience we define $c_i(x)=b_i(x)=0$ for $i < 0$ and $i > d$.
			
A connected graph in which every vertex is distance-regularized is called a {\it distance-regularized} graph. A special case of such graphs are {\it distance-regular} graphs where all vertices have the same intersection array. Other examples are bipartite graphs in which vertices in the same color partition have the same intersection array, but which are not distance-regular. We call these graphs {\it distance-biregular}. It turns out that every distance-regularized graph is either distance-regular or distance-biregular (see \cite{GST}).  

A connected bipartite graph $\G$ with color partitions $Y$ and $Y'$ is called {\it distance-semiregular with respect to $Y$} if it is distance-regular around all vertices in $Y$, with the same parameters (i.e., there exist scalars $b_i$ and $c_i$ such that $b_i(x,y)=b_i$ and $c_i(x,y)=c_i$ for each $x\in Y$ and $y\in \G_i(x)$). In this case, $\G$ is biregular: each vertex in $Y$ has valency $b_0$ and each vertex in $Y'$ has valency equal to $b_1+1$. Note that every distance-biregular graph is distance-semiregular with respect to both color partitions $Y$ and $Y'$.

 The incidence graph of a design $\mathcal{D}=(\mathcal{P}, \mathcal{B}, \mathcal{I})$ is a $(\mathcal{P}, \mathcal{B})$-bipartite graph where the point $x \in \mathcal{P}$ is adjacent to the block $B\in\mathcal{B}$ if and only if $x$ is incident with $B$. In this case, we observe that all points have eccentricity $D$ but the eccentricity of the blocks are not necessarily the same. If $D=1$, we observe there exists a one-to-one correspondence between the incidence graph of {$1$-$(1,1,b)$} designs and bipartite distance-regularized graphs with vertices of eccentricity $1$ (complete bipartite graphs $K_{1,b}$ with $b\geq 1$). If $D=2$, it is clear that there exists a one-to-one correspondence between the incidence graphs of $2$-$(v,v,b)$ designs and bipartite distance-regularized graphs with vertices of eccentricy $2$ (complete bipartite graphs  $K_{v,b}$ with $v\geq 2$, $b\geq 1$). The incidence graphs of symmetric $2$-designs are pricesely bipartite distance-regular graphs with vertices of eccentricity $3$, which are well studied (see \cite{BCN, DKT}). The properties of incidence graphs of non-symmetric $2$-designs were studied in \cite{FR}.  In {\cite[Theorem 5.2]{FR}}, it is shown that there is a one-to-one correspondence between the incidence graphs of $2$-designs and distance-semiregular graphs with distance-regularized vertices of eccentricity $3$. Moreover, it turns out that quasi-symmetric $2$-designs with one intersection number zero are exactly distance-biregular graphs with $D=3$ where every block has eccentricity $D'=4$. In this paper, we will characterize incidence structures whose incidence graph is a bipartite distance-regularized graph with vertices of eccentricity $4$. 

The paper is organized as follows. In the next section we give some properties of bipartite distance-regularized graphs. In Section \ref{sec:spbibd} we introduce the concept of special partially balanced incomplete block designs. Then, we relate bipartite distance-regularized graphs with incidence graphs of special partially balanced incomplete block designs in Section \ref{sec:incspbibd}. Finally, in Section \ref{sec:dsg} we prove that there is a one-to-one correspondence between the incidence graphs of  quasi-symmetric SPBIBDs with parameters $(v,b,r,k, \lambda_1,0)$ of type $(k-1,t)$ with intersection numbers $x=0$ and $y>0$, where $0< y\leq t<k$ , and bipartite distance-regularized graphs with $D=D'=4$.\\

\section{Bipartite distance-regularized graphs} \label{sec:dbg}

In this section we recall some results about bipartite distance-regularized graphs which we will find useful later in the article.

Let $\G$ denote a $(Y,Y')$-bipartite distance-regularized graph with vertex set $X$. By \cite{GST} we observe that  $\G$ is either a bipartite distance-regular graph ($\G$ is regular and all of its vertices have the same intersection numbers) or $\G$ is a distance-biregular graph ($\G$ is not regular and vertices of the same bipartite class have the same intersection numbers).  

Pick now $x \in X$ and let $\varepsilon(x)$ denote the eccentricity of $x$. Since $\G$ is bipartite, we have $a_i(x)=0$ for $0 \le i \le \varepsilon(x)$. Note that all vertices from $Y$ ($Y'$, respectively) have the same eccentricity. We denote this common eccentricity by $D$ ($D'$, respectively). We also observe that $|D-D'| \le 1$ and the diameter of $\G$ equals $\max\{D,D'\}$. In addition, all vertices from $Y$ ($Y'$, respectively) have the same valency $k$ ($k'$, respectively). For $x \in Y$, $y \in Y'$ and an integer $i$ we abbreviate $c_i := c_i(x)$, $b_i := b_i(x)$, $c'_i := c_i(y)$ and $b'_i := b_i(y)$. 
 Observe that for $0 \leq i \leq D$, we have that $c_i+b_i=k$ if $i$ is even, while $c_i+b_i=k'$ if $i$ is odd. Moreover, if $k=k'$, it is not hard to see that $D=D'$ and the scalars $b_i=b_i'$, $c_i=c_i'$; see also \cite[Lemma 1]{DC}.

\begin{lemma}
\label{pi2iY}
Let $\G$ denote a $(Y,Y')$-bipartite distance-regularized graph. Let $D$ denote the eccentricity of vertices from $Y$ and assume $D\geq 3$.  Pick a vertex $x\in Y$. For every integer $i \; (2 \leq i \leq D-1)$ and for $z \in \G_i(x)$,  the number of vertices which are at distance $2$ from  $x \in Y$ and at distance $i$ from $z$ depends only on the bipartite part $Y$ but not on the choice of $x \in Y$ and $z \in \G_i(x)$.  Moreover, 
	$$|\G_2(x)\cap\G_i(z)|=
	\left\{\begin{array}{rl}
		\displaystyle \frac{b_i(c_{i+1}-1)+c_i(b_{i-1}-1)}{c_2} & \mbox{if } i \mbox{ is even, } \displaystyle\\
		\displaystyle \frac{b'_i(c'_{i+1}-1)+c'_i(b'_{i-1}-1)}{c_2} & \mbox{if } i \mbox{ is odd. } \displaystyle
	\end{array}\right. $$ 
	In this case, we simply write $p^i_{2,i}:=p^i_{2,i}(Y)=|\G_2(x)\cap\G_i(z)|$.
\end{lemma}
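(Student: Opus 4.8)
\medskip
\noindent\emph{Proof strategy.} The claim will come out of a single double-counting identity. Fix $x\in Y$, an integer $i$ with $2\le i\le D-1$, and a vertex $z\in\G_i(x)$, and abbreviate $S=\G_2(x)\cap\G_i(z)$; this is the set whose size we want to determine. The plan is to count in two ways the set of pairs $(u,w)$ with $w\in S$ and $u$ a common neighbour of $w$ and $x$. Counting by the choice of $w$ is immediate: each $w\in S$ lies in $\G_2(x)$, so — $x$ being distance-regularized — it has exactly $c_2=c_2(x)$ neighbours in $\G(x)$, whence the number of pairs equals $c_2\,|S|$. Since $D\ge 3\ge 2$ we have $c_2>0$, so $|S|$ is recoverable once the number of pairs is known.

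The substance of the proof is to count the same pairs by the choice of $u$. Fix $u\in\G(x)$. Because $\partial(x,z)=i$, because $\G$ is bipartite, and because $u$ and $x$ lie in opposite color classes, the distances $\partial(z,u)$ and $\partial(z,x)=i$ have opposite parities; together with $|\partial(z,u)-\partial(z,x)|\le 1$ this forces $\partial(z,u)\in\{i-1,i+1\}$, which splits the count into two cases. If $\partial(z,u)=i-1$, then, since $x\in\G_i(z)$ and $z$ is distance-regularized, there are $c_i(z)$ such $u$; for each of them the neighbours $w$ of $u$ with $\partial(z,w)=i$ number $b_{i-1}(z)$, exactly one of which is $x$ itself (this is the one delicate point: $x$ lies in $\G_i(z)\cap\G(u)$ but not in $\G_2(x)$), while every other such $w$ is joined to $x$ by the length-$2$ path $w\,u\,x$ and lies in $x$'s color class, hence sits at distance precisely $2$ from $x$; so $u$ contributes $b_{i-1}(z)-1$ pairs. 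The case $\partial(z,u)=i+1$ is entirely symmetric, with $b_i(z)$ such $u$, each contributing $c_{i+1}(z)-1$ pairs. Adding the contributions gives
\[
c_2\,|S|\;=\;c_i(z)\bigl(b_{i-1}(z)-1\bigr)+b_i(z)\bigl(c_{i+1}(z)-1\bigr).
\]

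It remains to translate the intersection numbers at $z$ into the lemma's notation. Since $x\in Y$, the vertex $z\in\G_i(x)$ lies in $Y$ when $i$ is even and in $Y'$ when $i$ is odd; as vertices of a common color class of a distance-regularized graph share all their intersection numbers, $c_j(z)$ and $b_j(z)$ equal $c_j,b_j$ when $i$ is even and $c'_j,b'_j$ when $i$ is odd. Substituting and dividing by $c_2>0$ yields the two displayed formulas, whose right-hand sides involve only parameters attached to the color class of $x$; in particular $|\G_2(x)\cap\G_i(z)|$ is independent of the choices of $x\in Y$ and $z\in\G_i(x)$, which is the assertion. The boundary case $i=D-1$ with $D'=D-1$ requires no separate treatment: the out-of-range intersection number there is $0$ by the stated convention, and the factor multiplying it ($b_i(z)=b_{D'}(z)=0$) vanishes as well. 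I expect no genuine obstacle in this argument; the only care required is the isolation of the vertex $x$ among the neighbours of $u$ lying in $\G_i(z)$, and the verification that $z$ inherits the intersection numbers of its color class.
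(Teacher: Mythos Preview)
Your double-counting argument is correct and complete: the identification of $x$ as the unique vertex of $\G(u)\cap\G_i(z)$ that must be excluded, the bipartite parity argument forcing $\partial(z,u)\in\{i-1,i+1\}$, and the translation of $c_j(z),b_j(z)$ into the unprimed or primed parameters according to the parity of $i$ are all handled properly. The boundary remark about $i=D-1$ with $D'=D-1$ is also on point.

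The only difference from the paper is one of presentation rather than substance. The paper does not give an argument at all: it simply invokes the Godsil--Shawe-Taylor dichotomy (every bipartite distance-regularized graph is distance-regular or distance-biregular) and then cites \cite[Lemma~4.1.7]{BCN} for the distance-regular case and \cite[Lemma~3.1]{FR} for the distance-biregular case. Your proof is essentially what those cited lemmas do---the standard path-counting derivation of $p^i_{2,i}$---written out uniformly for both cases at once. So you have supplied a self-contained proof where the paper defers to the literature; mathematically the two routes coincide.
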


\begin{proof}
	Recall that every bipartite distance-regularized graph is either a bipartite distance-regular graph or a distance-biregular graph. The proof now immediately follows from \cite[Lemma~4.1.7]{BCN} and \cite[Lemma~3.1]{FR}. \end{proof}

Let $\G$ denote a bipartite graph with vertex set $X$, color partitions $Y$, $Y'$,  and assume that every vertex in $Y$ has eccentricity $D\ge 3$. For $z\in X$ and a non-negative integer $i$, recall that $\G_{i}(z)$ denotes the set of vertices in $X$ that are at distance $i$ from $z$. Graph $\G$ is {\it almost $2$-$Y$-homogeneous} whenever for all $i \; (1\leq i \leq D-2)$ and for all $x\in Y$, $y \in \G_2(x)$ and $z \in \G_{i}(x)\cap\G_i(y)$, the number of common neighbours of $x$ and $y$ which are at distance $i-1$ from $z$ is independent of the choice of $x$, $y$ and $z$. In addition, if the above condition holds also for the case $i=D-1$, then we say that $\G$ is {\it $2$-$Y$-homogeneous}. For $(Y,Y')$-bipartite distance-regularized graphs, we remark that the (almost) $2$-$Y$-homogeneous condition generalizes the notion of (almost) $2$-homogeneous distance-regular graphs which was well-studied by Curtin and Nomura; see for more details \cite{BC, CB, BCT, nomura1995spin}. Moreover, the (almost) $2$-$Y$-homogeneous condition in distance-biregular graphs was recently studied in \cite{FBPS} where the authors found necessary and sufficient conditions on the intersection array of $\G$ for which the graph is (almost) $2$-$Y$-homogeneous.

  Assume that $\G$ is a  $(Y,Y')$-bipartite distance-regularized graph. Note that, if $D=2$ then $\G$ is a complete bipartite graph with $D'\in \{1,2\}$ (if $D'=3$, then for any $x\in Y'$, $\G_3(x)\subseteq Y$, which yields $D\ge 3$, a contradiction). Moreover, if $D=2$ then $\G$ is $2$-$Y$-homogeneous by definition. For the rest of the paper we assume that $D\ge 3$ (which also yields $D'\geq 3$ - just use the same argument as in the previous sentence).
  
  Suppose that $\G$ has vertices of valency $2$. If $\G$ is regular then $\G$ is a cycle of even length and so, $\G$ is $2$-$Y$-homogeneous and $2$-$Y'$-homogeneous. Otherwise, by \cite[Corollary~3.5]{MST}, a graph $\G$ with  vertices of valency $2$ is distance-biregular if and only if $\G$ is either {the complete bipartite graph} $\G=K_{2,r}$, or $\G$ is the subdivision graph of a $(\kappa,g)$-cage graph. In \cite[Section~4]{FBPS}, it was shown that a $(Y,Y')$-distance-biregular graph with $k'=2$ is $2$-$Y$-homogeneous, and  some combinatorial properties of such graphs were given. {We then focus our attention on $(Y,Y')$-bipartite distance-regularized graphs with $k'\geq 3$.}

We define certain scalars $\Delta_i$ $(2\le i\le \min\left\lbrace D-1, D'-1 \right\rbrace)$, which can be computed from the intersection array of a given bipartite distance-regularized graph. These scalars play an important role, since from their values we can decide if a given bipartite distance-regularized graph is (almost) $2$-$Y$-homogeneous or not. 

\begin{definition}\label{def}{\rm
		Let $\G$ denote a $(Y,Y')$-bipartite distance-regularized graph $\G$. Let $D$ ($D'$, respectively) denote the eccentricity of vertices from $Y$ ($Y'$, respectively). Pick $i$ $(1\leq i \leq \min\left\lbrace D-1, D'-1 \right\rbrace)$, and with reference to Lemma \ref{pi2iY}, define the scalar $\Delta_i=\Delta_i(Y)$ in the following way:
		$$
		\Delta_{i}=
		\left\{\begin{array}{rl}
			\displaystyle(b_{i-1}-1)(c_{i+1}-1)-p^i_{2,i}(c_2'-1) & \mbox{if } i \mbox{ is even,}\\
			\displaystyle (b'_{i-1}-1)(c'_{i+1}-1)-p^i_{2,i}(c_2'-1) & \mbox{if } i \mbox{ is odd.} 
		\end{array}\right. 
		$$
}\end{definition}

 We end this section pointing out the following results which we will find useful later to decide if a given bipartite distance-regularized graph has the (almost) $2$-$Y$-homogeneous condition. 
 
\begin{theorem}
	\label{Ek}
	With reference to Definition~\ref{def}, let $\G$ denote a $(Y,Y')$-bipartite distance-regularized graph with $k'\ge 3$ and $D\ge 3$. Then the following are equivalent.
	\begin{enumerate}[label=(\roman*),font=\rm]
		\item $\Delta_i=0$ $(2\le i\le \min\{D-1,D'-1\})$.
		\item For every $i$ $(2\le i\le D-1)$, there exist $x \in Y$ and $y\in\G_2(x)$ such that for all $z\in\G_{i,i}(x,y)$ the number $|\G_{1,1,i-1}(x,y,z)|$ is independent of the choice of $z$.
		\item $\G$ is $2$-$Y$-homogeneous.
	\end{enumerate}
\end{theorem}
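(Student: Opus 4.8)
The plan is to prove the cycle of implications $\text{(iii)}\Rightarrow\text{(ii)}\Rightarrow\text{(i)}\Rightarrow\text{(iii)}$. The implication $\text{(iii)}\Rightarrow\text{(ii)}$ is immediate: if $\G$ is $2$-$Y$-homogeneous then, for each $i$ with $1\le i\le D-1$, the number $|\G_{1,1,i-1}(x,y,z)|$ does not depend on the choice of $x\in Y$, $y\in\G_2(x)$ and $z\in\G_{i,i}(x,y)$, so in particular any such $x,y$ witnesses the weaker statement (ii). (The hypotheses $D\ge 3$ and $k'\ge 3$ only keep us away from the even cycles and subdivisions of cages already handled in Section~\ref{sec:dbg}.)

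Next I would dispose of a bookkeeping point. Statement (i) controls only the indices $2\le i\le\min\{D-1,D'-1\}$, whereas $2$-$Y$-homogeneity additionally asks for the defining condition at $i=D-1$; the two ranges differ precisely when $D'=D-1$, and a short parity argument (using $\varepsilon(z)\ge\partial(x,z)$) shows that this forces $D$ to be even. In that case take $x\in Y$, $y\in\G_2(x)$ and $z\in\G_{D-1}(x)\cap\G_{D-1}(y)$; since $D-1$ is odd, $z\in Y'$, hence $\varepsilon(z)=D'=D-1$ and $\G_D(z)=\emptyset$, so by \eqref{desigualdadtriangular} and bipartiteness every common neighbour $w$ of $x$ and $y$ satisfies $\partial(w,z)\in\{D-2,D\}$ and therefore $\partial(w,z)=D-2$. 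Thus $|\G_{1,1,D-2}(x,y,z)|=|\G(x)\cap\G(y)|=c_2$ automatically, so the case $i=D-1$ holds for free, and it suffices to work with $2\le i\le\min\{D-1,D'-1\}$ throughout.

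The heart of both remaining implications is a double count for a fixed such $i$. Fix $x\in Y$, $y\in\G_2(x)$, set $W=\G(x)\cap\G(y)$ (so $|W|=c_2$), $Z=\G_i(x)\cap\G_i(y)$ and $\gamma(z)=|\G_{1,1,i-1}(x,y,z)|$. First, by \eqref{desigualdadtriangular} and bipartiteness each $w\in W$ is at distance $i-1$ or $i+1$ from any given $z\in Z$, whence
\[
|\G_{1,1,i-1}(x,y,z)|+|\G_{1,1,i+1}(x,y,z)|=c_2\qquad (z\in Z).
\]
Then I would evaluate $S_1=\sum_{z\in Z}\gamma(z)$ and $S_2=\sum_{z\in Z}\gamma(z)(\gamma(z)-1)$ by counting the pairs $(w,z)$ and the triples $(w,w',z)$ with $w\ne w'$ in $W$ (necessarily $\partial(w,w')=2$) and $\partial(w,z)=\partial(w',z)=i-1$, $z\in Z$; re-summing over $w$ and over the pairs $\{w,w'\}$ and using that $\G$ is distance-regular around each of its vertices together with Lemma~\ref{pi2iY} turns $S_1$, $S_2$ and $|Z|$ into expressions in the intersection array and the scalars $p^i_{2,i}$, in which the surviving quantities are exactly $b_{i-1}$, $c_{i+1}$, $c_2$, $c'_2$, $p^i_{2,i}$ — the data entering $\Delta_i$. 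Since $|Z|(S_1+S_2)\ge S_1^2$ by convexity, with equality if and only if $\gamma$ is constant on $Z$, substituting the closed forms and the formula for $p^i_{2,i}$ from Lemma~\ref{pi2iY} shows that ``$\gamma$ is constant on $Z$'' is equivalent to ``$\Delta_i=0$''. For $\text{(ii)}\Rightarrow\text{(i)}$ we apply this with the witness $x,y$ guaranteed by (ii): $\gamma$ is constant on $Z$, hence $\Delta_i=0$, for each $i$ in the range. For $\text{(i)}\Rightarrow\text{(iii)}$ we run it for every $x\in Y$ and $y\in\G_2(x)$: $\Delta_i=0$ forces $\gamma$ to be constant on the corresponding $Z$, and its value $S_1/|Z|$ is an intersection-array expression, hence independent of $x$ and $y$; combined with the trivial index $i=1$ and, when $D'=D-1$, the case $i=D-1$ settled above, this is exactly $2$-$Y$-homogeneity.

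The step I expect to be the real obstacle is the evaluation of $S_2$: counting the triples $(w,w',z)$ amounts to understanding, for two vertices $w,w'$ at distance $2$, how many of their common neighbours lie at a prescribed distance from a third vertex, and this is not in general a function of the intersection array. What rescues the argument is that the aggregate appearing in $S_2$ does reduce to array data and that the discrepancy between $S_2$ and its homogeneous value is governed by $\Delta_i$ (one expects $|Z|\sum_{z\in Z}\gamma(z)^2-S_1^2$ to evaluate to a fixed nonnegative multiple of $\Delta_i^2$, with routine adjustments when $c_2=1$ or $p^i_{2,i}=0$). Pushing this computation through is precisely the intersection-array analysis of (almost) $2$-$Y$-homogeneous distance-biregular graphs carried out in \cite{FBPS} (and, in the distance-regular case, the classical work of Curtin and Nomura); accordingly, a streamlined route is to quote that analysis and merely verify that the condition it produces, over the relevant range of indices, is $\Delta_i=0$.
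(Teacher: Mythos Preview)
The paper's own proof of this theorem is a single sentence: it is ``Immediate from the definition of the $2$-$Y$-homogeneous condition, \cite[Theorem~13]{BC} and \cite[Corollary~7.3]{FBPS}.'' In other words, the result is quoted from the literature; the distance-regular case is Curtin's Theorem~13 and the distance-biregular case is Corollary~7.3 of \cite{FBPS}, and the statement here just packages the two.

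Your proposal lands in exactly the same place --- at the end you ``quote that analysis and merely verify that the condition it produces \ldots\ is $\Delta_i=0$'' --- so as a proof it is equivalent to the paper's. What you add on top is genuine and correct: the implication $\text{(iii)}\Rightarrow\text{(ii)}$ is indeed trivial, and your treatment of the discrepancy between the index ranges in (i) and in the definition of $2$-$Y$-homogeneity is right. If $D'=D-1$ then a vertex at distance $D$ from some $x\in Y$ would lie in $Y'$ when $D$ is odd, contradicting $\varepsilon=D'<D$; hence $D$ is even, $z\in\G_{D-1}(x)$ lies in $Y'$, $\G_D(z)=\emptyset$, and every $w\in\G(x)\cap\G(y)$ is forced to distance $D-2$ from $z$, so $\gamma(z)=c_2$ automatically. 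This bookkeeping is not spelled out in the paper (it is absorbed into the cited corollaries), so it is a useful addition.

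Where your sketch is optimistic is the variance step. For $i=2$ and $i=3$ one can indeed check that $S_2$ is an intersection-array invariant (for $i=2$, each pair $w\ne w'$ in $W$ contributes $c_2'-2$; for $i=3$ a short inclusion--exclusion gives $p^2_{2,2}(Y')-2k+c_2+2$), so the Cauchy--Schwarz equality really becomes an array identity. For general $i$, however, the count of $z\in\G_{i-1,i-1}(w,w')\cap\G_{i,i}(x,y)$ involves four simultaneous distance constraints and is not transparently array-determined; your parenthetical ``one expects $|Z|\sum\gamma(z)^2-S_1^2$ to evaluate to a fixed nonnegative multiple of $\Delta_i^2$'' is precisely the nontrivial content of \cite[Theorem~13]{BC} and \cite[Corollary~7.3]{FBPS}, not a routine substitution. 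So the honest summary is: your route and the paper's coincide --- both defer the real work to \cite{BC,FBPS} --- but your write-up usefully isolates the easy implications and the $D'=D-1$ edge case that the paper leaves implicit.
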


\begin{proof}
	Immediate from the definition of the $2$-$Y$-homogeneous condition, \cite[Theorem~13]{BC} and \cite[Corollary~7.3]{FBPS}. 
\end{proof}

\begin{theorem}
	\label{Ekk}
	With reference to Definition~\ref{def}, let $\G$ denote a $(Y,Y')$-bipartite distance-regularized graph with $k'\ge 3$ and $D\ge 3$. Then the following are equivalent.
	\begin{enumerate}[label=(\roman*),font=\rm]
		\item $\Delta_i=0$ $(2\le i\le D-2)$.
		\item For every $i$ $(2\le i\le D-2)$, there exist $x \in Y$ and $y\in\G_2(x)$ such that for all $z\in\G_{i,i}(x,y)$ the number $|\G_{1,1,i-1}(x,y,z)|$ is independent of the choice of $z$.
		\item $\G$ is almost $2$-$Y$-homogeneous.
	\end{enumerate}
\end{theorem}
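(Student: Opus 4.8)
The plan is to run the argument of Theorem~\ref{Ek} essentially verbatim, the only change being that the index $i$ is now confined to the range $2\le i\le D-2$, which is precisely the range occurring in the definition of the \emph{almost} $2$-$Y$-homogeneous property. Two preliminary remarks make the reduction clean. First, for each $i$ with $1\le i\le D-2$ the scalar $\Delta_i$ is well-defined: since $|D-D'|\le 1$ we have $\min\{D-1,D'-1\}\ge D-2\ge i$, so Definition~\ref{def} applies. Second, the case $i=1$ of the almost $2$-$Y$-homogeneous condition is automatic, since for $x\in Y$, $y\in\G_2(x)$ and $z\in\G_{1,1}(x,y)$ the only common neighbour of $x$ and $y$ lying in $\G_0(z)$ is $z$ itself, so $|\G_{1,1,0}(x,y,z)|=1$ regardless of the choices.

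I would then establish the cycle of implications (iii)$\Rightarrow$(ii)$\Rightarrow$(i)$\Rightarrow$(iii). The implication (iii)$\Rightarrow$(ii) is immediate from the definition: if $\G$ is almost $2$-$Y$-homogeneous then for every $i$ with $1\le i\le D-2$ the number $|\G_{1,1,i-1}(x,y,z)|$ is independent of $x$, $y$ and $z$, so a fortiori, for each $i$ with $2\le i\le D-2$, one may fix a single pair $x\in Y$, $y\in\G_2(x)$ and obtain the required independence in $z$.

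For (ii)$\Rightarrow$(i) and (i)$\Rightarrow$(iii) I would invoke the same level-by-level analysis that powers Theorem~\ref{Ek}. Since every bipartite distance-regularized graph is either a bipartite distance-regular graph or a distance-biregular graph, one may apply \cite[Theorem~13]{BC} in the former case and \cite[Corollary~7.3]{FBPS} in the latter. For a fixed admissible $i$, these results show that if $|\G_{1,1,i-1}(x,y,z)|$ is independent of $z\in\G_{i,i}(x,y)$ for even a single pair $(x,y)$ with $x\in Y$ and $y\in\G_2(x)$, then $\Delta_i=0$; and conversely, if $\Delta_i=0$, then $|\G_{1,1,i-1}(x,y,z)|$ is independent of the choice of $x\in Y$, $y\in\G_2(x)$ and $z\in\G_{i,i}(x,y)$, equal to a value prescribed by the intersection array. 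Applying this for every $i$ with $2\le i\le D-2$ yields (ii)$\Rightarrow$(i), and combining the converse direction over the same range with the trivial case $i=1$ yields (i)$\Rightarrow$(iii). The one step that needs genuine work is this level-wise equivalence between the constancy of $|\G_{1,1,i-1}(x,y,z)|$ in $z$ and the vanishing of $\Delta_i$ --- a double-counting argument built on Lemma~\ref{pi2iY} and Definition~\ref{def} --- but it is already encapsulated in the cited references, so in the end the proof consists of quoting those statements with the index range truncated to $i\le D-2$.
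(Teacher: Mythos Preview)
Your proposal is correct and follows essentially the same approach as the paper: both obtain Theorem~\ref{Ekk} by invoking the definition of the almost $2$-$Y$-homogeneous property together with \cite[Theorem~13]{BC} and the relevant corollary from \cite{FBPS}. The only difference is cosmetic: the paper cites \cite[Corollary~7.4]{FBPS} (which already packages the ``almost'' version), whereas you cite \cite[Corollary~7.3]{FBPS} and argue that the level-by-level equivalence there can be truncated to the range $2\le i\le D-2$; both routes amount to the same thing.
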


\begin{proof}
	Immediate from the definition of the $2$-$Y$-homogeneous condition, \cite[Theorem~13]{BC} and \cite[Corollary~7.4]{FBPS}. 
\end{proof}

\section{Special partially balanced incomplete block designs}
\label{sec:spbibd}

Let $\mathcal{D}$ be a $1$-$(v,b,r,k,\lambda)$ design and let $(s,t)$ be a pair of non-negative integers. A flag (a non-flag) of $\mathcal{D}$ is a point-block pair $(p, B)$ such that $p\in B$ ($p \notin B$). We say that $\mathcal{D}$ is a {\it special partially balanced incomplete block design} (SPBIBD for short) of type $(s,t)$ if there are constants $\lambda_1$ and $\lambda_2$ with the following properties: 
\begin{enumerate}[label=(\roman*),font=\rm]
	\item Any two points are contained in either $\lambda_1$ or $\lambda_2$ blocks. 
	\item If a point-block pair $(p,B)$ is a flag, then the number of points in $B$ which occur with $p$ in $\lambda_1$ blocks is $s$.  
	\item   If a point-block pair $(p,B)$ is a non-flag, then the number of points in $B$ which occur with $p$ in $\lambda_1$ blocks is $t$.
	\end{enumerate}
In this case, we say that $\mathcal{D}$ is a $(v,b,r,k,\lambda_1, \lambda_2)$ SPBIBD of type $(s,t)$.

  The intersection numbers of a $1$-$(v, k, \lambda)$ design are the cardinalities of the intersection of any two distinct blocks. 
  Let $x$ and $y$ be non-negative integers with $x< y$. A design $\mathcal{D}$ is called a (proper) {\it quasi-symmetric design} with intersection numbers $x$ and $y$ if any two distinct blocks of $\mathcal{D}$ intersect in either $x$ or $y$ points, and both intersection numbers are realized. That is, if $|B\cap B'|\in \{x,y\}$ for any pair of distinct blocks $B, B'$ and both intersection numbers occur. 

Recall that the dual of a design $\mathcal{D}=(\mathcal{P}, \mathcal{B}, \mathcal{I})$ is the structure $\mathcal{D'}=(\mathcal{B}, \mathcal{P}, \mathcal{I'})$  such that $(B,p)\in  \mathcal{I'}$ if and only if $(p,B)\in  \mathcal{I}$  for every $(p,B)\in \mathcal{B} \times \mathcal{P}$. The dual of a quasi-symmetric SPBIBD is an SPBIBD (see \cite[Theorem 4.39]{SS-QS}).

A {\it partial geometry $(r, k, t)$} is a $1$-$(v,b,r, k, \lambda)$ design such that:
\begin{itemize}
\item any two points are incident with at most one block
\item for every non-flag $(x,B)$, there exist exactly $t$ blocks
that are incident with $x$ and intersect $B$ $(1 \leq t \leq r, 1 \leq t \leq k)$.
\end{itemize} 

Notice that partial geometries are exactly  quasi-symmetric $(v,b,r,k,1,0)$ SPBIBDs of type $(k-1,t)$  with intersection numbers $x=0$ and $y=1$. A partial geometry for which $t=1$ is called a {\it generalized quadrangle}.

Let $\mathcal{D}$ be a $(v,b,r,k,\lambda_1,0)$ SPBIBD of type $(k-1,t)$. If $r=b$, then every point belongs to every block, which is a trivial case that we will not consider. Therefore, we will assume $r<b$. Also, if $t=0$, then every two different blocks do not interesect, and if $t=k$, then $\mathcal{D}$ is a $2$-design.

\begin{lemma} \label{cases}
	Let $\mathcal{D}=(\mathcal{P}, \mathcal{B}, \mathcal{I})$ be a quasi-symmetric $(v,b,r,k,\lambda_1,0)$ SPBIBD of type $(k-1,t)$  with intersection numbers $x=0$ and $y>0$. Then $y\leq t<k$. If $y>1$, then $t>y$ and $\lambda_1< t\lambda_1/y<r$.
 
\end{lemma}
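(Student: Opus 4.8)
The plan is to work directly with counting arguments over flags and non-flags, exploiting the definition of an SPBIBD of type $(k-1,t)$ together with the quasi-symmetric hypothesis $x=0$, $y>0$. First I would establish $y\le t$: take two distinct blocks $B,B'$ with $|B\cap B'|=y>0$, pick a point $p\in B\cap B'$, and observe that $(p,B')$ is a flag, so by property (ii) of the SPBIBD definition the number of points of $B'$ lying with $p$ in $\lambda_1$ blocks equals $s=k-1$; hence \emph{every} point of $B'\setminus\{p\}$ meets $p$ in exactly $\lambda_1$ blocks, and in particular the $y-1$ points of $(B\cap B')\setminus\{p\}$ do. On the other hand, fixing instead a point $q\notin B'$ but with $q\in B$ (such $q$ exists because $B\ne B'$ and $x=0$ forces $B\not\subseteq B'$, so $B\setminus B'\ne\emptyset$; and it is a non-flag with $B'$), property (iii) says exactly $t$ points of $B'$ meet $q$ in $\lambda_1$ blocks. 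The idea is to route a comparison through a common point: choosing $q\in B\setminus B'$ and noting the $y$ points of $B\cap B'$ are candidates among the $t$ points of $B'$ counted by (iii) relative to $q$ — one checks each point of $B\cap B'$ lies with $q$ in $\lambda_1$ blocks (because $q$ and that point are both on $B$, and $(q,B)$ is a flag forcing all of $B$'s other points to be $\lambda_1$-related to $q$) — which yields $y\le t$. Combined with the standing assumption $t<k$ (from the paragraph preceding the lemma, since $t=k$ would make $\mathcal D$ a $2$-design), this gives $y\le t<k$.

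For the second part, assume $y>1$. To get the strict inequality $t>y$, I would suppose for contradiction that $t=y$ and show this forces the design to be "too rigid": if $t=y$, then for a non-flag $(q,B')$ the $t=y$ points of $B'$ that are $\lambda_1$-related to $q$ must be \emph{exactly} a block-intersection pattern, and a double count of pairs (point $q$ outside $B'$, point of $B'$ $\lambda_1$-related to $q$) matched against pairs coming from blocks through $q$ meeting $B'$ in $y$ points would over-determine the structure, contradicting $y>1$ (essentially: it would collapse the two intersection numbers or force $x=y$). The cleanest route is probably to count, for a fixed block $B'$ and a fixed point $p\in B'$, the blocks $B\ne B'$ with $p\in B$: each such $B$ meets $B'$ in $y\ge 1$ points and contributes, and summing $|B\cap B'|-1=y-1$ over the $r-1$ blocks through $p$ other than $B'$ counts pairs $(p,p')$ with $p'\in B'$, $p'\ne p$, $p'$ in a common block with $p$; this is $(r-1)(y-1)$ on one side and, since all of $B'\setminus\{p\}$ is $\lambda_1$-related to $p$ (flag condition), it is $(k-1)(\lambda_1-1)$ on the other, giving the key identity $(r-1)(y-1)=(k-1)(\lambda_1-1)$ from which $\lambda_1>1$ when $y>1$.

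Finally, the chain $\lambda_1<t\lambda_1/y<r$ should follow from two separate estimates. The left inequality $\lambda_1<t\lambda_1/y$ is immediate from $t>y$ (just established) and $\lambda_1\ge 1$. For the right inequality $t\lambda_1/y<r$, I would count flags in a fixed block $B$ versus the $\lambda_1$-relation: fix a non-flag $(p,B)$; the $t$ points of $B$ that are $\lambda_1$-related to $p$ each sit with $p$ in $\lambda_1$ common blocks, and each such common block $B''$ (containing $p$ and meeting $B$) accounts for exactly $|B\cap B''|=y$ of those $t$ points (it meets $B$ in $y$ points since $x=0$ is impossible for a block through $p$ meeting $B$ — wait, one must verify the common block meets $B$ in $y$, not $0$: it does because it shares at least the relevant point of $B$). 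Hence the number of blocks through $p$ meeting $B$ is $t\lambda_1/y$ (so in particular $y\mid t\lambda_1$), and this number is strictly less than $r$, the total number of blocks through $p$, since at least one block through $p$ is disjoint from $B$ (here is where $r<b$ and the non-triviality enter: if every block through $p$ met $B$, a short count would force $r=b$ or collapse a parameter). Combining the two estimates yields $\lambda_1<t\lambda_1/y<r$.

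The step I expect to be the main obstacle is the strict inequality $t>y$ when $y>1$ — equivalently, ruling out $t=y$. The inequalities $y\le t$ and the final chain are essentially bookkeeping once the right flag/non-flag incidences are set up, but excluding $t=y$ seems to genuinely require identifying what the design would have to look like in that boundary case (plausibly it would be forced to be a partial geometry, i.e. $y=1$, contradicting $y>1$) and deriving the contradiction from the identity $(r-1)(y-1)=(k-1)(\lambda_1-1)$ together with the divisibility and the assumed quasi-symmetry; pinning down that argument cleanly is where the real work lies.
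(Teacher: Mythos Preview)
Your setup for $y\le t$ matches the paper's (you pick $q\in B\setminus B'$ for intersecting blocks $B,B'$; the paper picks a non-flag $(p,B)$ and a block through $p$ meeting $B$ --- the same configuration), and your count showing that exactly $t\lambda_1/y$ blocks through $p$ meet a fixed non-flag block $B$ is also what the paper uses. For the strict inequality $t\lambda_1/y<r$ your hesitation is unwarranted: the parameters are global, so if equality held then for \emph{every} non-flag $(p,B)$ all $r$ blocks through $p$ would meet $B$; given any two blocks $B,B''$ with $B\cap B''=\emptyset$ (such a pair exists since the intersection number $x=0$ is realized), picking $p\in B''$ yields a contradiction. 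That is exactly the paper's argument.

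You correctly locate the only real difficulty at the strict inequality $t>y$ when $y>1$, but the route you propose --- via the identity $(r-1)(y-1)=(k-1)(\lambda_1-1)$ --- although a valid identity, does not by itself exclude $t=y$, and you explicitly leave this step open. The paper's argument here is short and direct, and you should replace your sketch with it. Assume $t=y$, fix a non-flag $(p,B)$, and let $\{p_1,\dots,p_t\}\subset B$ be the points of $B$ that are $\lambda_1$-related to $p$. Any block $B_1$ through $p$ and $p_1$ meets $B$ (since $p_1\in B\cap B_1$), hence $|B\cap B_1|=y=t$; moreover every point of $B\cap B_1$ is $\lambda_1$-related to $p$ (it lies with $p$ on $B_1$), so necessarily $B\cap B_1=\{p_1,\dots,p_t\}$. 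In particular $p_2\in B_1$ (here $y>1$ guarantees that $p_2$ exists). Thus each of the $\lambda_1$ blocks through $p$ and $p_1$ also contains $p_2$; since $B$ is a further block through $p_1,p_2$ not containing $p$, there are at least $\lambda_1+1$ blocks through $p_1$ and $p_2$, contradicting that this pair lies in exactly $\lambda_1$ common blocks. This forced-intersection observation is the missing idea; your identity $(r-1)(y-1)=(k-1)(\lambda_1-1)$ is not needed for the lemma.
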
 

\begin{proof}
 Let $(p,B)$  be a non-flag and let $B_1$ be a block such that $p \in  B_1$  and $B_1$ intersects $B$ in $y$ points. From this it follows that the number of points in $B$ which occur with $p$ in $\lambda_1$ blocks is greater or equal than $y$, that is $y\leq t$. Also $t<k$, since from $t=k$ it follows that every two points are together in $\lambda_1$ blocks.

  Let $y>1$ and $y=t$. For a non-flag $(p,B)$  let $\{p_1, p_2,...,p_t\} \subset B$ be the set of points in $B$ which occur with $p$ in $\lambda_1$ blocks.   Every block $B_1$ containing $p$ and $p_1$ will intersect $B$ in points $p_1, p_2,...,p_t$ , and, since $y=t$, it follows that $\lambda_1=1$.  Also, in that case there are at least $2=\lambda_1+1$ blocks ($B$ and $B_1$)  containing $p_1$  and $p_2$ , which is not possible. It follows that $t>y$. Furthermore, for a non-flag $(p,B)$  there are exactly $t\lambda_1/y$ blocks containing $p$  and intersecting $B$. Since $t>y,$ it follows $\lambda_1< t\lambda_1/y$. Also $t\lambda_1/y<r$, because in the contrary every two block will intersect.
 \end{proof}


 \begin{theorem}\label{cases2}
For the quasi-symmetric $(v,b,r,k,\lambda_1,0)$ SPBIBD of type $(k-1,t)$  with intersection numbers $x=0$ and $y>1$ it follows that $k\geq 4$ and $r\geq 4$.
\end{theorem}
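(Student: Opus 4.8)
The plan is to derive both bounds from Lemma~\ref{cases}, after first pinning down that $\lambda_1\geq 2$. For the latter I would use that $\mathcal{D}$ is a proper quasi-symmetric design: both intersection numbers are attained, so there exist two \emph{distinct} blocks $B,B'$ with $|B\cap B'|=y$. Since $y>1$, the blocks $B$ and $B'$ have at least two common points $p,q$. As $\mathcal{D}$ is a $(v,b,r,k,\lambda_1,0)$ SPBIBD, any two of its points lie in either $\lambda_1$ or $0$ blocks, and $p,q$ lie in at least the two blocks $B$ and $B'$; hence they lie in $\lambda_1$ blocks, so $\lambda_1\geq 2$.

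Given this, the bound $k\geq 4$ is immediate: Lemma~\ref{cases} yields $y\leq t<k$ and, because $y>1$, also $t>y$; from $y\geq 2$ we get $t\geq 3$, and thus $k\geq t+1\geq 4$. For $r\geq 4$, Lemma~\ref{cases} gives the chain $\lambda_1<t\lambda_1/y<r$. The quantity $t\lambda_1/y$ counts the blocks through a fixed point that meet a fixed block not containing that point, so it is a nonnegative integer; being strictly greater than $\lambda_1\geq 2$, it is at least $3$, and since $r$ is an integer strictly greater than it, $r\geq 4$.

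I do not expect a real obstacle here. The one point that must not be glossed over is that $\lambda_1\geq 2$ whenever $y>1$ — this is precisely the feature separating the present setting from the partial-geometry case $y=1$ (where $\lambda_1=1$), and without it the chain $\lambda_1<t\lambda_1/y<r$ would only give $r\geq 3$. The second point to handle carefully is that $t\lambda_1/y$ is an integer, namely a block count, which is what permits sharpening the strict inequalities of Lemma~\ref{cases} to $t\lambda_1/y\geq\lambda_1+1$ and $r\geq t\lambda_1/y+1$.
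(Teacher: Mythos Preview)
Your proof is correct and follows essentially the same route as the paper. The only organizational difference is that you establish $\lambda_1\geq 2$ up front and then push the chain $\lambda_1<t\lambda_1/y<r$ through integrality, whereas the paper first gets $r\geq 3$ from that chain and then rules out $r=3$ by the contrapositive (if $r=3$ then $\lambda_1=1$, forcing $y=1$); both hinge on the same observation that $y>1$ forces $\lambda_1\geq 2$.
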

\begin{proof}
By Lemma \ref{cases}, from $y>1$ we have $t>2$ and $k>3$. 
Furthermore, $0<\lambda_1<t\lambda_1/y<r$ implies that $r \geq 3$. If $r=3$ then $\lambda_1=1$, so $y=1$. We conclude that $r\geq 4.$
\end{proof}

\section{The incidence graph of a SPBIBD}
\label{sec:incspbibd}

The incidence graph of a design $\mathcal{D}=(\mathcal{P}, \mathcal{B}, \mathcal{I})$ is a $(\mathcal{P}, \mathcal{B})$-bipartite graph where the point $x \in \mathcal{P}$ is adjacent to the block $B\in\mathcal{B}$ if and only if $x$ is incident with $B$. In this section, we mention some properties about the incidence graph of certain SPBIBDs. We also study the $2$-$\mathcal{P}$-homogeneous and $2$-$\mathcal{B}$-homogeneous conditions in these graphs.

\begin{lemma}\label{lemma3}
	Let $\mathcal{D}=(\mathcal{P}, \mathcal{B}, \mathcal{I})$ be a $1$-$(v,b,r,k,\lambda_1, 0)$  SPBIBD of type $(k-1,t)$ where $0<t<k$. Let $\G$ denote the incidence graph of $\mathcal{D}$. Then, every vertex $p \in \mathcal{P}$ has eccentricity $4$ in $\G$. 
\end{lemma}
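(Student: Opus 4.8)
The plan is to show that any point $p \in \mathcal{P}$ has eccentricity exactly $4$ in the incidence graph $\G$, by establishing that (a) every block lies within distance $3$ of $p$, (b) every point lies within distance $4$ of $p$, and (c) there is at least one point at distance exactly $4$ from $p$. Since $\G$ is bipartite with color partitions $\mathcal{P}$ and $\mathcal{B}$, a point $p$ is at even distance from other points and odd distance from blocks, so the only candidate values for $\varepsilon(p)$ are in $\{2,3,4,\dots\}$; I will pin it down to $4$.

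First I would handle the blocks. Pick $p \in \mathcal{P}$ and an arbitrary block $B \in \mathcal{B}$. If $p \in B$, then $\partial(p,B)=1$. If $p \notin B$, then $(p,B)$ is a non-flag, and since $\mathcal{D}$ has type $(k-1,t)$ with $t>0$, there is at least one point $q \in B$ lying with $p$ in $\lambda_1 \ge 1$ common blocks; in particular $p$ and $q$ lie on a common block $B'$, giving the walk $p - B' - q - B$ of length $3$, so $\partial(p,B) \le 3$. Hence every block is within distance $3$ of $p$, which already shows $\varepsilon(p) \le 4$ because any point $p'$ is adjacent to some block (as $r \ge 1$), and that block is within distance $3$ of $p$, so $\partial(p,p') \le 4$. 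For the reverse inequality I must produce a point at distance exactly $4$. Here I would use the standing assumptions $r < b$ and $t < k$. The condition $t<k$ says that for a flag $(p,B')$ there is a point $q \in B'$ (namely one of the $k - 1 - (k-1) $... more precisely one of the $k - s = k - (k-1) = 1$ "remaining" points — wait, $s = k-1$ so only one point of $B'$ is *not* counted, that point $q$ occurs with $p$ in $\lambda_2 = 0$ blocks) such that $p$ and $q$ are together in $0$ blocks, i.e. $\partial(p,q) \ne 2$; combined with bipartiteness this forces $\partial(p,q) \ge 4$, and by the bound just proved $\partial(p,q) = 4$. This simultaneously shows such a $q$ exists (every block through $p$ supplies one) and that it realizes eccentricity $4$.

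The key steps, in order, are: (1) note bipartiteness constrains distances from $p$ to the parities described; (2) bound $\partial(p,B) \le 3$ for every block using the non-flag condition and $t > 0$, deducing $\varepsilon(p) \le 4$; (3) use $s = k-1 < k$ to locate, inside any block through $p$, a point $q$ with $\lambda_2 = 0$ common blocks with $p$, so $\partial(p,q) \notin \{0,2\}$, hence $\partial(p,q)=4$; (4) conclude $\varepsilon(p)=4$, and since this holds for every $p$, the claim follows. I expect the only subtlety — not really an obstacle — is making sure the degenerate configurations are excluded: that a point with $\lambda_2 = 0$ companion genuinely exists (which needs $k \ge 2$, guaranteed since $v > k > t \ge 1$ forces $k \ge 2$, and needs each point to lie on at least one block, i.e. $r \ge 1$) and that $\partial(p,q)$ cannot be $3$ or any odd value (immediate from bipartiteness). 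One should also double-check that $\partial(p,q) = 2$ is equivalent to $p,q$ lying on a common block and hence to their joint replication number being positive — this is exactly the definition of the incidence graph — so $\lambda_2 = 0$ indeed rules out distance $2$.
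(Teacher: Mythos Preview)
Your upper-bound argument (steps (1) and (2)) is correct and matches the paper's reasoning: for a non-flag $(p,B)$, the condition $t>0$ yields a point $q\in B$ sharing a block with $p$, whence $\partial(p,B)\le 3$, and so $\varepsilon(p)\le 4$.

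Step (3), however, contains a genuine error. You try to locate a point at distance $4$ inside a block $B'$ \emph{through} $p$, arguing that since $s=k-1$ there is one ``remaining'' point $q\in B'$ occurring with $p$ in $\lambda_2=0$ blocks. But the $k-1$ points counted by $s$ are exactly the points of $B'\setminus\{p\}$; the single uncounted point is $p$ itself, not a new point $q\ne p$. Thus every point of $B'\setminus\{p\}$ lies at distance $2$ from $p$, and no block through $p$ can supply a witness at distance $4$. (Your own parenthetical hesitation --- ``$k-s=k-(k-1)=1$ remaining points'' --- is pointing at precisely this issue.)

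The paper's fix is to look instead at a \emph{non-flag} $(p,B)$, which exists because $r<b$. Since $t<k$, the block $B$ contains $k-t>0$ points not among the $t$ points of $B$ that occur with $p$ in $\lambda_1$ blocks; any such point $q$ occurs with $p$ in $\lambda_2=0$ blocks, so $\partial(p,q)\ne 2$, and bipartiteness together with your upper bound forces $\partial(p,q)=4$. In short, the hypothesis you need for the lower bound is $t<k$ applied to a non-flag, not $s=k-1$ applied to a flag.
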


\begin{proof}
	  Notice that for any $p\in \PP$, there are $r$ blocks containing $p$ and so, which are at distance $1$ from $p$. Moreover, since $r<b$ there exists a block $B \in \BB$ such that $(p,B)$ is not a flag. 
	
	Let $S$ denote the set of all points incident with $B$ which are at distance greater than $2$ from $p$. We observe that $S$ is nonempty and has size $k-t<k$.  Pick $q\in S$. Since $\G$ is bipartite, in this case we have that $\partial(p,q)\geq 4$. Note also that there are $t$ points in $B$ which are at distance $2$ from $p$. Let $w$ be such a point. For a block $B'$ containing both $p$ and $w$, it follows that $\left[ p, B', w, B, q\right]$ is a $pq$-path of length $4$, meaning that $\partial(p,q)=4$ and $\partial(p,B)=3$. 	
	Hence, from the above comments, every point $p\in \PP$ has eccentricity $4$. 
\end{proof}

\begin{lemma}\label{lemma11}
	Let $\mathcal{D}=(\mathcal{P}, \mathcal{B}, \mathcal{I})$ be a $(v,b,r,k,\lambda_1,0)$ SPBIBD of type $(k-1,t)$, where $0<t<k.$ Let $\G$ denote the incidence graph of $\mathcal{D}$.  Then, every vertex $p \in \mathcal{P}$ is distance-regularized. Moreover, $\G$ is distance-semiregular with respect to $\mathcal{P}$ with the following intersection numbers:
	\begin{eqnarray}
		c_0&=&0, \hspace{0.5cm} c_{1}=1, \hspace{1.2cm} c_{2}=\lambda_1, \hspace{1.2cm} c_{3}=t, \hspace{1.75cm} c_{4}=r. \nonumber \\
		b_0&=&r, \hspace{0.5cm} b_{1}=k-1, \hspace{0.5cm} b_{2}=r-\lambda_1, \hspace{0.5cm} b_{3}=k-t \hspace{1.2cm} b_{4}=0. \nonumber
	\end{eqnarray} 
\end{lemma}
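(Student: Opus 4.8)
The plan is to translate the defining axioms of the SPBIBD, together with Lemma~\ref{lemma3}, into statements about distances in $\G$, and then to read off the intersection numbers by a direct case analysis on the distance $i$ from a fixed point. First I would record the relevant dictionary. Since $\lambda_2=0$, any two distinct points $p,q$ occur together in either $\lambda_1$ or $0$ blocks, so they lie on a common block if and only if they occur together in exactly $\lambda_1$ blocks; in particular $\lambda_1\ge 1$ (a block has $k\ge 2$ points) and, in $\G$, $\partial(p,q)=2$ precisely when $p$ and $q$ lie on a common block, in which case $|\G_1(p)\cap\G(q)|=\lambda_1$. Next, a pair $(p,B)$ is a flag exactly when $\partial(p,B)=1$, and a non-flag exactly when $\partial(p,B)=3$: indeed $\partial(p,B)=1$ forces $p\in B$, and $\partial(p,B)=3$ forces $p\notin B$, while the proof of Lemma~\ref{lemma3} (which uses $t\ge1$) shows every non-flag block actually sits at distance $3$ from $p$. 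Consequently $\G$ is connected, every point has eccentricity $D=4$, from a point $p$ every block lies at distance $1$ or $3$ and every point lies at distance $0$, $2$ or $4$, and, by axiom (iii), for a non-flag $(p,B)$ exactly $t$ of the $k$ points of $B$ occur with $p$ in $\lambda_1$ blocks, i.e.\ exactly $t$ points of $B$ lie at distance $2$ from $p$ and the other $k-t$ at distance $4$.

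With this dictionary in place, I would fix $p\in\mathcal{P}$ and check, for each $i\in\{0,1,2,3,4\}$ and each $y\in\G_i(p)$, that $c_i(p,y)$ and $b_i(p,y)$ take the claimed values (the numbers $a_i$ being $0$ since $\G$ is bipartite). For $i=0$ this is immediate with $b_0=|\G(p)|=r$. For $i=1$, $y$ is a block $B\ni p$: the only neighbour of $B$ at distance $0$ from $p$ is $p$ itself, so $c_1=1$, while the remaining $k-1$ points of $B$ share the block $B$ with $p$ and hence lie at distance $2$, so $b_1=k-1$. For $i=2$, $y$ is a point $q$ sharing a block with $p$: the blocks through $q$ at distance $1$ from $p$ are exactly the $\lambda_1$ blocks through both $p$ and $q$, so $c_2=\lambda_1$, and the remaining $r-\lambda_1$ blocks through $q$ lie at distance $3$, so $b_2=r-\lambda_1$. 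For $i=3$, $y$ is a block $B$ with $(p,B)$ a non-flag: the points of $B$ at distance $2$ from $p$ are exactly the $t$ points occurring with $p$ in $\lambda_1$ blocks, so $c_3=t$, and the other $k-t$ points of $B$ lie at distance $4$, so $b_3=k-t$. For $i=4$, $y$ is a point $q$ not sharing any block with $p$: no block through $q$ can contain $p$ (that would give $\partial(p,q)=2$), so all $r$ blocks through $q$ lie at distance $3$ from $p$, whence $c_4=r$, and $b_4=0$ since $\varepsilon(p)=4$. None of these counts depended on the particular choice of $y\in\G_i(p)$, so each $p\in\mathcal{P}$ is distance-regularized; since the resulting values coincide for every $p\in\mathcal{P}$, the graph $\G$ is distance-semiregular with respect to $\mathcal{P}$ with exactly the stated intersection numbers.

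I do not anticipate a serious obstacle: once the dictionary above is established, the argument is essentially bookkeeping. The one point needing genuine care is the claim that every non-flag $(p,B)$ lies at distance \emph{exactly} $3$ rather than merely at distance $\ge 3$; this is where the hypothesis $t\ge1$ and Lemma~\ref{lemma3} are actually used, and it is what collapses the distance partition around a point to the five classes $\G_0,\dots,\G_4$ with no further cases. A secondary point to keep straight is the role of $\lambda_2=0$: it is precisely what makes ``$p$ and $q$ share a block'' equivalent to ``$\partial(p,q)=2$'', removing any ambiguity between the two possible pair-intersection numbers and pinning down $c_2=\lambda_1$.
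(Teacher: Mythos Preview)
Your proposal is correct and follows essentially the same route as the paper: invoke Lemma~\ref{lemma3} to pin down the eccentricity (and hence that every non-flag sits at distance exactly $3$), then run through $i=0,\dots,4$ and read off $c_i,b_i$ directly from the SPBIBD axioms. If anything, you are a bit more explicit than the paper about why $\lambda_2=0$ and $t\ge 1$ are the hypotheses that make the dictionary work, but the argument is the same.
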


\begin{proof}
	 Note that $\G$ is bipartite with bipartitions $\mathcal{P}$ and $\mathcal{B}$. As every point is contained in $r$ blocks and every block has size $k$, it is easy to see that $\G$ is $(r,k)$-biregular. We notice that every pair of points either are contained in exactly $\lambda_1$ blocks or are not contained in any block. Moreover, by Lemma~\ref{lemma3} every point in $\mathcal{P}$ has eccentricity equal to $4$. 
	
	We will now show that every point in $\mathcal{P}$ is distance-regularized. Pick $p\in \mathcal{P}$. For every $z\in \G_i(p) \; (0 \leq i \leq 4)$, consider the numbers $b_i(p,z)$, $a_i(p,z)$ and $c_i(p,z)$. Recall that $a_i(p,z)=0$ as the graph $\G$ is bipartite. Note that $b_0(p,z)=|\G(p)|$, $c_0(p,z)=0$, $c_1(p,z)=1$ and $b_4(p,z)=0$. As every block has size $k$, every block in $\G(p)$ contains $k-1$ points different from $p$ and so, $b_1(p,z)=k-1$. Similarly, as every pair of points that appears in a block is contained in $\lambda_1$ blocks we have $c_2(p,z)=\lambda_1$ and, since every point is contained in $r$ blocks, we have $b_2(p,z)=r-\lambda_1$. Moreover, for a block $z\in \G_3(p)$ we have $t$ points in $z\in \BB$ which occur with $p$ in $\lambda_1$ blocks. This yields $c_3(p,z)=t$ and $b_3(p,z)=k-t$, since every block has size $k$. Furthermore, $c_4(p,z)=r$ as every point appears in $r$ blocks. Thus, the numbers $b_i(p,z)$, $a_i(p,z)$ and $c_i(p,z)$ do not depend on the choice of $z\in \G_i(p) \; (0 \leq i \leq 4)$, and $\G$ is distance-regular around $p$. It follows from the above comments that $\G$ is distance-semiregular with respect to $\mathcal{P}$. 
\end{proof}

\begin{lemma}\label{block}
	Let $\mathcal{D}=(\mathcal{P}, \mathcal{B}, \mathcal{I})$ be a quasi-symmetric $(v,b,r,k, \lambda_1,0)$ SPBIBD of type $(k-1,t)$  with intersection numbers $x=0$ and $y>0$. Then, for every $B \in \mathcal{B}$ there exist $B_1, B_2 \in \BB$ such that $|B\cap B_1|=0$ and $|B\cap B_2|=y$. 
\end{lemma}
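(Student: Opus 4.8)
The plan is to argue separately about the two required blocks, using the structural facts already established. First I would produce a block $B_1$ with $|B\cap B_1|=0$. Fix $B\in\mathcal{B}$. Since $r<b$, not every point lies in every block, so in particular there is a non-flag $(p,B)$; then $p$ lies in $r\ge 1$ blocks, and I would like one of them to miss $B$ entirely. Here I would invoke the type-$(k-1,t)$ condition together with $t<k$ (Lemma \ref{cases}): among the $r$ blocks through $p$, those meeting $B$ do so in the intersection number $y$ (since $x=0$), and the total number of such blocks is exactly $t\lambda_1/y$ by the counting in the proof of Lemma \ref{cases}; because $t\lambda_1/y<r$ (again Lemma \ref{cases}, or trivially $\lambda_1<r$ when $y\le t$), there is a block $B_1$ through $p$ with $B_1\cap B=\varnothing$, giving $|B\cap B_1|=0$. (If $y=1$ the same count gives $t<r$, which holds since $t<k\le r$ is not automatic — but $t\lambda_1/y=t\cdot 1/1=t$ and one checks $t<r$ from $b_3=k-t>0$ and the biregularity relations, or simply from the partial-geometry case where $t\le r$ with equality excluded; in all cases a disjoint block exists.)

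Next I would produce $B_2$ with $|B\cap B_2|=y$. The cleanest route is to pick any point $p\in B$ and any point $q\in B$ with $q\ne p$; since the design has block size $k\ge 2$ such $q$ exists. If $p$ and $q$ lie together in $\lambda_1$ blocks, then taking the flag $(p,B)$ and using the type-$(k-1,t)$ axiom, $p$ occurs with $k-1$ points of $B$ in $\lambda_1$ blocks, so some such point exists; a block $B'\ne B$ through that pair meets $B$ in exactly the intersection number $y$ (it cannot be $0$, since it shares the pair, and $x=0<y$). Alternatively, and more robustly, I would use Lemma \ref{cases}: since $y>0$ is actually realized as an intersection number of $\mathcal{D}$, there is at least one pair of blocks meeting in $y$ points, and then I would propagate this to $B$ via a flag count — for a flag $(p,B)$ the $k-1$ points of $B\setminus\{p\}$ are each joined to $p$ by $\lambda_1$ blocks other than $B$ (here using that the flag-count is $s=k-1$), each such block meets $B$ in $y$ points, and not all of these blocks can be $B$ itself, so one of them is the desired $B_2$.

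The step I expect to be the genuine obstacle is the disjoint block $B_1$: showing a block avoiding $B$ exists requires knowing the non-flag count $t\lambda_1/y$ is strictly less than $r$, i.e.\ that not every block through a point $p\notin B$ meets $B$. For $y>1$ this is exactly the inequality $t\lambda_1/y<r$ from Lemma \ref{cases}. For $y=1$ (the partial-geometry case) one instead argues directly: the $t$ blocks through $p$ meeting $B$ account for $t$ of the $r$ blocks at $p$, and $t<k$ together with the design relation $bk=vr$ and $r<b$ forces $t<r$; alternatively, from Lemma \ref{lemma11} the incidence graph has $b_3=k-t>0$, which by distance-semiregularity says precisely that from a point some neighbouring block has a neighbour block that is not forced to re-meet the far block, yielding a disjoint block. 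Once both $B_1$ and $B_2$ are in hand the statement follows immediately, and I would present the $y>1$ and $y=1$ cases in parallel, citing Lemma \ref{cases} for the former and the elementary counting above for the latter.
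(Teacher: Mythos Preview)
Your plan has the right shape, but both halves break down in the case $y=1$ (equivalently $\lambda_1=1$), which is exactly the partial-geometry case you flag as delicate.

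For $B_2$: your flag approach picks $p,q\in B$ and looks for a block $B'\ne B$ through $p$ and $q$. When $\lambda_1=1$, the \emph{only} block through $p$ and $q$ is $B$ itself, so no such $B'$ exists. Your alternative phrasing (``each joined to $p$ by $\lambda_1$ blocks other than $B$'') has the same defect: the count is $\lambda_1-1$, not $\lambda_1$, and vanishes when $\lambda_1=1$. The paper avoids this by working from a \emph{non}-flag: take $p\notin B$ (possible since $k<v$), pick one of the $t>0$ points $q\in B$ that occur with $p$ in $\lambda_1$ blocks, and let $B_2$ be any block through $p$ and $q$. Then $B_2\ne B$ automatically because $p\in B_2\setminus B$, and $q\in B\cap B_2$ forces $|B\cap B_2|=y$.

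For $B_1$: your counting approach (blocks through a point $p\notin B$ that meet $B$ number $t\lambda_1/y$) is fine and is in fact simpler than the paper's route, but your justification of $t<r$ in the $y=1$ case is wrong: ``$t<k$, $bk=vr$, $r<b$'' gives $k<v$, not $t<r$, and the appeal to $b_3=k-t>0$ says nothing about $r$. The clean fix is the same argument Lemma~\ref{cases} uses for $y>1$: the quantity $t\lambda_1/y$ is at most $r$ since it counts blocks through $p$; if equality held, then for \emph{every} non-flag $(p,B)$ every block through $p$ meets $B$, so any two blocks intersect, contradicting that $x=0$ is realised. This works uniformly in $y$. The paper instead proves this half by passing to the dual SPBIBD and deriving a contradiction there; your counting argument, once repaired, is more direct.
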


\begin{proof}
	Let $B \in \mathcal{B}$. Since $k<v$, there exists $p \in \PP$ such that $(p, B)$ is a non-flag. Note also that there are $t$ points in $B$ which are at distance $2$ from $p$. Let $q$ be such a point. Then, there exists a block $B_2 \in \mathcal{B}$ such that $\{p,q\}\subseteq B_2$. Thus, $q \in B\cap B_2 $ and therefore, $|B\cap B_2|=y$. 
	
	Let $\mathcal{D}'$ denotes the dual od $\mathcal{D}$. Since $\mathcal{D}$ is quasi-symmetric, $\mathcal{D}'$ is an SPBIBD, let say of type $(s',t')$. Now, suppose that $B$ intersects all the other blocks in $\BB$. This means that in the incidence graph of $\mathcal{D}'$ there exists a point $p'$ which is at distance $2$ from any other point.
	If $p$ is contained in all blocks of $\mathcal{D}'$, then $k=v$, which is not possible. So, there exists a block $B'$ of $\mathcal{D}'$ such that the pair $(p,B')$ is a non-flag. Since $p'$ is at distance $2$ from any other point then the number of points in $B'$ that are at distance $2$ from $p'$ equals the size of a block. That is, $t'=r$. 
	
	Now, let $q'\neq p'$ be a point of $\mathcal{D}'$. Then, any other point $q''$ in $\mathcal{D}'$ is either in a block with $q''$ or not. In the first case, the distance between $q'$ and $q''$ is $2$. In the second case, there exists a block $B''$ of $\mathcal{D}'$ such that $q'' \in B''$ and the pair $(q', B'')$ is a non-flag. Since $t'$ equals the size od the block in $\mathcal{D}'$,  we have that $q''$ is at distance $2$ from $q'$. This shows that $q'$ is at distance two from any other point. Therefore, any two points of $\mathcal{D}'$ are at distance 2. In the other words, any two blocks of $\mathcal{D}$ intersect, which contradicts $x=0$. Hence, there must be a block $B_1\in \BB$ such that $B\cap B_1=\emptyset$. This finishes the proof.   
\end{proof}

\begin{corollary}\label{tnotr}
	Let $\mathcal{D}=(\mathcal{P}, \mathcal{B}, \mathcal{I})$ be a quasi-symmetric $(v,b,r,k,\lambda_1,0)$ SPBIBD of type $(k-1,t)$  with intersection numbers $x=0$ and $y>0$. Then $t<r$. 
\end{corollary}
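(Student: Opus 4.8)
The plan is to pass to the dual design and thereby reduce the statement to the single inequality $t\lambda_1/y<r$. Since $\mathcal{D}$ is quasi-symmetric, its dual $\mathcal{D}'=(\mathcal{B},\mathcal{P},\mathcal{I}')$ is again an SPBIBD, and I would first identify its parameters. Its blocks (the points of $\mathcal{D}$) carry $r$ points each and its replication number is $k$, and since any two blocks of $\mathcal{D}$ meet in $0$ or $y$ points, $\mathcal{D}'$ is a $(b,v,k,r,y,0)$-SPBIBD. It is quasi-symmetric with intersection numbers $x'=0$ and $y'=\lambda_1$: two blocks of $\mathcal{D}'$, that is, two points of $\mathcal{D}$, meet in $0$ or $\lambda_1$ points according as the two points of $\mathcal{D}$ are non-collinear or collinear, and both cases occur by Lemma~\ref{lemma3}. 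For the type: if $(B,p)$ is a flag of $\mathcal{D}'$ (so $p\in B$ in $\mathcal{D}$), every other block of $\mathcal{D}$ through $p$ meets $B$, hence in exactly $y$ points since $x=0$; thus $\mathcal{D}'$ has type $(r-1,t')$. For a non-flag $(B,p)$ of $\mathcal{D}'$ the number $t'$ is precisely the number of blocks of $\mathcal{D}$ through $p$ that meet $B$; double-counting the pairs $(q,C)$, where $q$ ranges over the $t$ points of $B$ lying at distance $2$ from $p$ in the incidence graph (there are $t$ of them, by the type of $\mathcal{D}$) and $C$ over the $\lambda_1$ blocks through $p$ and $q$, and using that every block through $p$ meeting $B$ contains exactly $y$ such points $q$, yields $t'=t\lambda_1/y$.

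Next I would apply Lemma~\ref{cases} to $\mathcal{D}'$: it is a quasi-symmetric SPBIBD of type $(r-1,t')$ with intersection numbers $0$ and $y'=\lambda_1>0$ and block size $r$, so $\lambda_1\le t'<r$, i.e.\ $t\lambda_1/y<r$, equivalently $t\lambda_1<ry$. If $\lambda_1\ge y$ this already gives $t\le t\lambda_1/y<r$, and we are done; this covers in particular the partial-geometry case $y=1$ (where $\lambda_1=1$), for which one can alternatively argue directly from Lemma~\ref{block}: if $B_1$ is a block disjoint from a block $B$ and $p\in B_1$, then $(p,B)$ is a non-flag, the $t$ points of $B$ at distance $2$ from $p$ lie in $t$ pairwise distinct blocks through $p$ (distinct because $y=1$), and $B_1$ is one further block through $p$, so $r\ge t+1$.

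The remaining case $\lambda_1<y$ is where I expect the real work. First $\lambda_1\ge 2$: if $\lambda_1=1$ then, by Lemma~\ref{block}, two blocks meet, and since $y\ge 2$ in this case they share at least two points, which would then lie in at least two common blocks, contradicting $\lambda_1=1$; consequently $y\ge\lambda_1+1\ge 3$, whence Lemma~\ref{cases} gives $t>y$ and Theorem~\ref{cases2} gives $r,k\ge 4$. Suppose, for contradiction, that $t\ge r$. I would then bring in the identity
\[
(r-1)(y-1)=(k-1)(\lambda_1-1),
\]
obtained by fixing a point $q$ of a block $B$ and double-counting the incidences between $B\setminus\{q\}$ and the $r-1$ blocks through $q$ other than $B$: each such block meets $B$ in $y$ points, hence in $y-1$ points of $B\setminus\{q\}$, while each $q'\in B\setminus\{q\}$, being collinear with $q$, lies in $\lambda_1-1$ of these blocks. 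Combining this identity with $t\le k-1$, with $t\lambda_1<ry$, and with $t\ge r$ confines the parameters to a very narrow range; clearing the last surviving parameter sets should then use the strong regularity of the collinearity graph of $\mathcal{D}$ on $\mathcal{P}$ — available because, by Lemma~\ref{lemma11} (applied to $\mathcal{D}$ and to its dual $\mathcal{D}'$) together with Lemma~\ref{block}, the incidence graph of $\mathcal{D}$ is distance-biregular with $D=D'=4$, so its halved graph on $\mathcal{P}$ is a strongly regular graph — via the non-negativity of its $\mu$-parameter and of its eigenvalue multiplicities. This final elimination in the case $\lambda_1<y$ is the main obstacle.
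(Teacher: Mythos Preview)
Your route through the dual design is sound as far as it goes: the dual parameters are correctly identified, the double count giving $t'=t\lambda_1/y$ is valid, and applying Lemma~\ref{cases} to the dual yields $t\lambda_1/y<r$, hence $t<r$ whenever $\lambda_1\ge y$ (and your separate treatment of $y=1$ via Lemma~\ref{block} is fine). But, as you yourself flag, the case $\lambda_1<y$ is left unresolved, and the proposed escape via eigenvalue constraints on the halved collinearity graph is speculative and far heavier than the problem should warrant. That is a genuine gap in your argument.

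The paper's proof is entirely different and much shorter: it never passes to the dual and never invokes Lemma~\ref{cases}. It argues by contradiction directly from Lemma~\ref{block}: assuming $t=r$, it asserts that for any block $B$ and any non-flag $(p,B)$, every block $B'$ through $p$ satisfies $|B\cap B'|=y$; hence no block is disjoint from $B$, contradicting Lemma~\ref{block}. That is the line you should try to emulate rather than building the dual machinery. Note, however, that the paper's argument as written addresses only $t=r$ (not $t>r$), and the key implication is asserted without justification---since the number of blocks through $p$ meeting $B$ is $t\lambda_1/y$, not $t$, one would need $\lambda_1=y$ for it to equal $r$. So while the paper points to the right lemma and the right style of contradiction, neither argument as stated cleanly disposes of the regime $\lambda_1<y$; your dual computation at least makes the inequality $t\lambda_1/y<r$ rigorous, which is the step that actually matters later in Theorem~\ref{he}.
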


\begin{proof}
	Assume that $t=r$. Pick $B\in \mathcal{B}.$ Then for every $p \in \mathcal{P}$ and $B' \in \mathcal{B}$ such that $(p,B)$ is a non-flag and $(p,B')$ is a flag, it follows that $\left| B\cap B'\right| =y$. Hence, there is no $B_1 \in \mathcal{B}$ such that $\left| B\cap B_1\right| =0$, contradicting Lemma \ref{block}.  
\end{proof}

\begin{lemma}\label{lemma4}
	Let $\mathcal{D}=(\mathcal{P}, \mathcal{B}, \mathcal{I})$ be a quasi-symmetric $(v,b,r,k,\lambda_1,0)$ SPBIBD of type $(k-1,t)$  with intersection numbers $x=0$ and $y>0$.  Let $\G$ denote the incidence graph of $\mathcal{D}$. Then, every vertex $B \in \mathcal{B}$ has eccentricity $4$ in $\G$. 
\end{lemma}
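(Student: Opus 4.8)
The goal is to show that every block $B \in \mathcal{B}$ has eccentricity $4$ in the incidence graph $\G$. Since $\G$ is bipartite with parts $\mathcal{P}$ and $\mathcal{B}$, distances from a block to another block are even and distances from a block to a point are odd; in particular $\partial(B,B') \in \{0,2,4,\dots\}$ and $\partial(B,p) \in \{1,3,5,\dots\}$. So it suffices to prove two things: (a) there is some vertex at distance $\geq 4$ from $B$, and (b) no vertex is at distance $\geq 5$ from $B$, i.e. $\varepsilon(B) \le 4$. Combining (a) and (b) with parity forces $\varepsilon(B) = 4$.

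For part (a), I would invoke Lemma~\ref{block}: it guarantees the existence of a block $B_1 \in \mathcal{B}$ with $B \cap B_1 = \emptyset$. Since $B$ and $B_1$ share no point, no point is adjacent to both, so $\partial(B,B_1) \ge 4$; hence $\varepsilon(B) \ge 4$. (One should also note $\partial(B,B_1)$ is finite since $\G$ is connected, and even by bipartiteness, so it is exactly $\ge 4$ — the precise value being $4$ will follow once (b) is established.)

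For part (b), I would bound $\varepsilon(B)$ from above by producing, for an arbitrary vertex $u$, a walk of length $\le 4$ from $B$ to $u$. If $u = p$ is a point: either $(p,B)$ is a flag, so $\partial(B,p)=1$; or $(p,B)$ is a non-flag, in which case (as in the proof of Lemma~\ref{lemma3}, using the type $(k-1,t)$ condition with $t>0$) there are $t \ge y > 0$ points of $B$ at distance $2$ from $p$, so taking such a point $q$, a block $B'$ through $p$ and $q$, we get the walk $[B,q,B',p]$ of length $3$, so $\partial(B,p) \le 3$. If $u = B'$ is a block: if $B \cap B' \neq \emptyset$ then $\partial(B,B')=2$; otherwise pick any point $p' \in B'$ — then $(p',B)$ is a non-flag (as $B \cap B' = \emptyset$), and by the previous case $\partial(B,p') = 3$, whence $\partial(B,B') \le 4$. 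In all cases $\partial(B,u) \le 4$, so $\varepsilon(B) \le 4$.

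The argument is essentially a routine diameter/eccentricity computation; the only real content is imported from Lemma~\ref{block}, which supplies the disjoint block needed for the lower bound, and from the structural consequence of the type $(k-1,t)$ condition (every non-flag point sees exactly $t>0$ points of the block at distance $2$), which drives the upper bound. The main point to be careful about is handling the non-flag case cleanly and making sure the "$t$ points at distance $2$" claim is correctly attributed — this is exactly what was established en route to Lemma~\ref{lemma3} and is where $0 < t < k$ is used; I do not anticipate any genuine obstacle beyond bookkeeping.
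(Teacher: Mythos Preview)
Your proposal is correct and follows essentially the same approach as the paper: both use Lemma~\ref{block} to obtain a disjoint block for the lower bound $\varepsilon(B)\ge 4$, and both establish the upper bound by constructing length-$3$ paths for non-flag point--block pairs (via the $t>0$ condition) and then length-$4$ paths for disjoint block pairs. The only cosmetic difference is that the paper bounds $\partial(p,B)\le 4$ for a non-flag by citing Lemma~\ref{lemma3} and then uses parity to get $\partial(p,B)=3$, whereas you construct the length-$3$ walk directly; the underlying argument is the same.
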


\begin{proof}
	Recall that $\G$ is the $(\mathcal{P}, \mathcal{B})$-bipartite graph where the point $ p \in \mathcal{P}$ is adjacent to the block $B\in\mathcal{B}$ if and only if  $p$ is incident with $B$. Moreover,  as $\mathcal{D}$ is quasi-symmetric, any two distinct blocks of $\mathcal{D}$ intersect in either $x=0$ or $y>0$ points, and both intersection numbers are realized. By Lemma~\ref{block} we also know that for every $B \in \mathcal{B}$ there is a blocks that intersect $B$ and another block that does not intersect $B$.
	
	Pick $B \in \mathcal{B}$ and let $B' \in \mathcal{B}$. If $B\cap B'\neq \emptyset$ then $B$ and $B'$ have a point in common, showing that $\partial(B,B')=2$. Assume next  that $B\cap B'=\emptyset$. Pick $p\in B$. Since $(p,B')$ is not a flag there exist $t$ points in $B'$ which are at distance $2$ from $p$. Let $w$ be such a point. Notice $w\notin B$ since $B$ and $B'$ are disjoint. So, for a block $B''$ containing both $p$ and $w$, it follows that $\left[ B, p, B'', w, B'\right]$ is a path of length $4$, meaning that $\partial(B,B')=4$. 
	
	Pick now $p \in \PP$ and $B\in \BB$. If $p\in B$ we have that $\partial(p,B)=1$. Suppose next that $p \notin B$.  Since $\G$ is bipartite we observe that $\partial(p,B)$ is odd and so, $\partial(p,B)\geq 3$. Moreover, by Lemma \ref{lemma3}, every point in $\PP$ has eccentricity $4$ and so  $\partial(p,B)\leq 4$. This shows that $\partial(p,B)= 3$ if $p$ does not belong to $B$. 	Thus, the eccentricity of any block in $\G$ is equal to $4$. 
\end{proof}

The following theorem characterizes incidence graphs of  a quasi-symmetric  $(v,b,r,k,\lambda_1,0)$ SPBIBD of type $(k-1,t)$  with intersection numbers $x=0$ and $y>0$.

\begin{theorem}\label{he}
	Let $\mathcal{D}=(\mathcal{P}, \mathcal{B}, \mathcal{I})$ be a quasi-symmetric $(v,b,r,k,\lambda_1,0)$ SPBIBD of type $(k-1,t)$  with intersection numbers $x=0$ and $y>0$.  Let $\G$ denote the incidence graph of $\mathcal{D}$. Then, $\G$ is a $(\mathcal{P}, \mathcal{B})$-bipartite distance-regularized graph. Moreover, every vertex $p \in \mathcal{P}$ has eccentricity equals $4$ and the following intersection numbers:
	\begin{eqnarray}
		c_0&=&0, \hspace{0.5cm} c_{1}=1, \hspace{1.2cm} c_{2}=\lambda_1, \hspace{1.2cm} c_{3}=t, \hspace{1.75cm} c_{4}=r. \nonumber \\
		b_0&=&r, \hspace{0.5cm} b_{1}=k-1, \hspace{0.5cm} b_{2}=r-\lambda_1, \hspace{0.5cm} b_{3}=k-t \hspace{1.2cm} b_{4}=0. \nonumber
	\end{eqnarray} 
	In addition, every vertex $B \in \mathcal{B}$ has eccentricity equals $4$ and the following intersection numbers:
	\begin{eqnarray}
		c_0'&=&0, \hspace{0.5cm} c'_{1}=1, \hspace{1.2cm} c'_{2}=y, \hspace{1.25cm} c'_{3}=\frac{t\lambda_1}{y}, \hspace{1.2cm} c'_{4}=k. \nonumber \\
		b_0'&=&k, \hspace{0.5cm} b'_{1}=r-1, \hspace{0.5cm} b_{2}'=k-y, \hspace{0.5cm} b'_{3}=r-\frac{t\lambda_1}{y}, \hspace{0.5cm} b'_{4}=0. \nonumber 
	\end{eqnarray} 	
	
\end{theorem}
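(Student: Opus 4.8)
The plan is to deduce everything from the lemmas already proved in this section. Lemmas~\ref{lemma3} and \ref{lemma11} give that $\G$ is the $(\mathcal{P},\mathcal{B})$-bipartite, $(r,k)$-biregular incidence graph, that every $p\in\mathcal{P}$ has eccentricity $4$, and that $\G$ is distance-semiregular with respect to $\mathcal{P}$ with the stated scalars $c_i,b_i$; Lemma~\ref{lemma4} gives that every $B\in\mathcal{B}$ also has eccentricity $4$. Hence the only thing left is to show that each block $B$ is distance-regularized with the asserted array $c_i',b_i'$. Once this is done, every vertex of $\G$ is distance-regularized, so $\G$ is a $(\mathcal{P},\mathcal{B})$-bipartite distance-regularized graph (and, by \cite{GST}, distance-biregular, or distance-regular precisely when $r=k$).

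Fix $B\in\mathcal{B}$. First I would record the distance partition around $B$, using bipartiteness together with Lemmas~\ref{block} and \ref{lemma4}: $\G_1(B)$ is the set of the $k$ points incident with $B$; $\G_2(B)$ is the set of blocks meeting $B$, each in exactly $y$ points by quasi-symmetry; $\G_3(B)$ is the set of points not incident with $B$ (all at distance exactly $3$, by the proof of Lemma~\ref{lemma4}); and $\G_4(B)$ is the nonempty set of blocks disjoint from $B$. Next I would compute $c_i'(B,z)$ and $b_i'(B,z)$ for each $z\in\G_i(B)$ and check independence of $z$; in every case $a_i'(B,z)=0$ since $\G$ is bipartite, and the two numbers sum to $k$ when $z$ is a block and to $r$ when $z$ is a point. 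The case $i=0$ is trivial. For $z=p\in\G_1(B)$ one gets $c_1'=1$ and $b_1'=r-1$, since each of the other $r-1$ blocks through $p$ meets $B$ at $p$. For $z=B'\in\G_2(B)$ one gets $c_2'=|B\cap B'|=y$ and $b_2'=k-y$, the latter being the number of points of $B'$ lying outside $B$, which are exactly the neighbours of $B'$ in $\G_3(B)$. For $z=B'\in\G_4(B)$ every point of $B'$ lies outside $B$, hence in $\G_3(B)$, so $c_4'=k$ and $b_4'=0$. All these values are expressed purely in the design parameters, hence are the same for every $B$.

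The one case that needs real work is $z=p\in\G_3(B)$, that is, $(p,B)$ a non-flag; here $c_3'(B,p)$ is the number of blocks through $p$ that meet $B$, and I must show it equals $t\lambda_1/y$ independently of the chosen non-flag. By definition of an SPBIBD of type $(k-1,t)$, exactly $t$ points of $B$ occur with $p$ in $\lambda_1$ blocks; and since $\lambda_2=0$, any point of $B$ lying in at least one block with $p$ must be one of these $t$ points, so every block through $p$ meeting $B$ meets it in exactly $y$ of them. A double count of the pairs (block through $p$ meeting $B$, point of $B$ on that block) then gives $c_3'(B,p)\cdot y=t\lambda_1$, whence $c_3'=t\lambda_1/y$, an integer since it is a count, and therefore $b_3'=r-t\lambda_1/y$. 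This is precisely the enumeration already carried out in the proof of Lemma~\ref{cases}, and it is the only genuine obstacle in the argument. Since none of the numbers $c_i'(B,z),b_i'(B,z)$ depends on $z$, the block $B$ is distance-regularized with the claimed array; as $B$ was arbitrary and the array is the same for all blocks, $\G$ is distance-semiregular with respect to $\mathcal{B}$ as well, so every vertex of $\G$ is distance-regularized, which completes the proof.
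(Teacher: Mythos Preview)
Your proof is correct and follows essentially the same route as the paper's: invoke Lemmas~\ref{lemma11} and \ref{lemma4} for the point array and the block eccentricity, then verify directly that each block $B$ is distance-regularized, with the only nontrivial step being the double count $c_3'\cdot y=t\lambda_1$ for a non-flag $(p,B)$, which is exactly the ``count paths of length $3$ two ways'' argument the paper uses. Your write-up is slightly more explicit about the distance partition around $B$ and about why all $y$ intersection points lie among the $t$ distinguished points, but the substance is identical.
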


\begin{proof}
	Since  $\mathcal{D}$ is a $(v,b,r,k,\lambda_1,0)$ SPBIBD of type $(k-1,t)$  where $0< t<k$, by Lemma \ref{lemma11}, every vertex in $\mathcal{P}$ has eccentricity $4$ and $\G$ is distance-semiregular with respect to $\mathcal{P}$, where the intersection numbers for every $p\in \mathcal{P}$ can be computed as in the proof of Lemma \ref{lemma11}. Moreover, by Lemma~\ref{lemma4}  the eccentricity of every block in $\G$ equals $4$. 
	
	We will now show that every block in $\G$ is distance-regularized. Pick $B\in \mathcal{B}$. For every $z\in \G_i(B) \; (0 \leq i \leq 4)$, consider the numbers $b'_i(B,z)$, $a'_i(B,z)$ and $c'_i(B,z)$. Recall that $a'_i(B,z)=0$, as the graph $\G$ is bipartite. Note that $b'_0(B,z)=|\G(B)|$, $c'_0(B,z)=0$, $c'_1(B,z)=1$ and $b'_4(B,z)=0$. As every two blocks which are at distance $2$ have $y$ common neighbours, we have $c'_2(B,z)=y$.  Suppose for the moment that $z\in \G_3(B)$. We will count the number of paths of length $3$ between $B$ and $z$ in two different ways. Firstly, observe that $z$ has $c'_3(B,z)$ neighbours in $\G_2(B)$ and each of these neighbours is adjacent to exactly $y$ vertices in $\G(B)$, since every two intersecting blocks have $y$ points in common. Secondly, note that there are $t$ points in $B$ which occur with $p$ in $\lambda_1$ blocks. Thus, we have $c'_3(B,z)y=t\lambda_1$. Also, as each block contains $k$ points, we have $c'_4(B,z)=k$. Furthermore, since every point occurs in $r$ blocks and each block has size $k$, it follows from the above comments the numbers $b'_i(B,z)$ and $c'_i(B,z)$ do not depend on the choice of $z\in \G_i(B) \; (0 \leq i \leq 4)$. Hence, $\G$ is distance-regular around $B$. If $k=r$ then by \cite[Lemma 1]{DC} we have that $\lambda_1=y$ and so, $\G$ is a bipartite distance-regular graph with diameter $4$. Otherwise, $\G$ is a distance-biregular graph with $D=D'=4$. This finishes the proof. 
\end{proof}

\subsection{The (almost) $2$-$\mathcal{P}$-homogeneous condition}

According to Theorem~\ref{he}, the incidence graph of certain SPBIBDs is a $(\mathcal{P}, \mathcal{B})$-bipartite distance-regularized graph with all vertices of eccentricity equal to four. Here we explore the (almost) $2$-$\mathcal{P}$-homogeneous condition of the incidence graph of such SPBIBD.

\begin{proposition}\label{k2}
	Let $\G$ denote the incidence graph of a $\mathcal{D}=(\mathcal{P}, \mathcal{B}, \mathcal{I})$  quasi-symmetric $(v,b,r,k,\lambda_1,0)$ SPBIBD of type $(k-1,t)$  with intersection numbers $x=0$ and $y=1$. The following \rm{(i)},\rm{(ii)} are equivalent:
	\begin{enumerate}[label=(\roman*),font=\rm]
		\item Each block has size $2$.
		\item $\G$ is isomorphic to the subdivision graph of a complete bipartite graph $K_{r,r}=(X,\mathcal{R})$ with $r \geq 2$ and $X=\mathcal{P}$.
	\end{enumerate}
In this case, $\G$ is $2$-$\mathcal{P}$-homogeneous. 
\end{proposition}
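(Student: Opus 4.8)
The plan is to prove the two implications separately and then invoke the known results on distance-biregular graphs whose shorter side has valency $2$.

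\emph{The direction \rm{(ii)}$\Rightarrow$\rm{(i)}.} This is immediate. If $\G$ is (isomorphic to) the subdivision graph of $K_{r,r}=(X,\mathcal{R})$ with $X=\mathcal{P}$, then, since the vertex set of $\G$ is $\mathcal{P}\sqcup\mathcal{B}$ and the vertex set of the subdivision graph is $X\sqcup\mathcal{R}$ with $X=\mathcal{P}$, the bipartite class $\mathcal{B}$ is exactly the set of subdivision vertices, i.e.\ the edge set $\mathcal{R}$. Each such vertex is incident in $\G$ with precisely the two endpoints of the corresponding edge of $K_{r,r}$, so every block is incident with exactly $2$ points; that is, $k=2$.

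\emph{The direction \rm{(i)}$\Rightarrow$\rm{(ii)}.} Assume $k=2$. By Lemma~\ref{cases}, $0<y\le t<k=2$, so $y=t=1$, and since $\mathcal{D}$ is simple a block of size $2$ through two given points is unique, whence $\lambda_1=1$. Identify $\mathcal{D}$ with the simple graph $H$ on vertex set $\mathcal{P}$ whose edge set is $\mathcal{B}$; then $\G$ is precisely the subdivision graph of $H$, $H$ is $r$-regular (each point lies in $r$ blocks), and $H$ is connected because $\G$ is. The key step is the SPBIBD non-flag condition with $t=1$: for every point $p$ and every block (edge) $B=\{q_1,q_2\}$ with $p\notin B$, exactly one of $q_1,q_2$ is adjacent to $p$ in $H$ (being in a common block with $p$ is the same as being adjacent to $p$ in $H$, as $\lambda_2=0$). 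Applying this with $q_1,q_2\in\G(p)$ shows $N_H(p):=\G(p)$ is independent in $H$, and applying it with $q_1,q_2$ both non-neighbours of $p$ shows that $M(p):=\mathcal{P}\setminus(\{p\}\cup N_H(p))$ is independent. Hence every edge of $H$ lies between $\{p\}\cup M(p)$ and $N_H(p)$, so $H$ is bipartite with these two parts. An $r$-regular bipartite graph has equal parts, and here one part has size $|N_H(p)|=r$; so both parts have exactly $r$ vertices, and a vertex of degree $r$ whose neighbours all lie in a set of size $r$ is adjacent to all of that set. Therefore $H=K_{r,r}$, and $r\ge 2$ (for $r=1$ the graph $\G$ would be disconnected, and moreover $\mathcal{D}$ could not be quasi-symmetric since then no two blocks would meet). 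Thus $\G$ is the subdivision graph of $K_{r,r}$ with $X=\mathcal{P}$.

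\emph{The final assertion.} Suppose \rm{(i)}, equivalently \rm{(ii)}, holds. If $r=2$ then $\G$ is an even cycle (namely $C_8$), which is $2$-$\mathcal{P}$-homogeneous. If $r\ge 3$ then, by Theorem~\ref{he}, $\G$ is a $(\mathcal{P},\mathcal{B})$-distance-biregular graph in which the blocks have valency $k=2$; equivalently, $\G$ is the subdivision graph of the $(r,4)$-cage $K_{r,r}$. By \cite[Section~4]{FBPS} such a graph is $2$-$\mathcal{P}$-homogeneous. The only non-routine point in the whole argument is the combinatorial step in \rm{(i)}$\Rightarrow$\rm{(ii)}: extracting the bipartiteness of $H$ from the $t=1$ non-flag condition and then forcing $H$ to be $K_{r,r}$ (in particular, the bookkeeping that both parts have size exactly $r$ and the exclusion of $r=1$); everything else is a translation of definitions or a citation.
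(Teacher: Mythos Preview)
Your proof is correct, and for the implication (i)$\Rightarrow$(ii) it follows a genuinely different route from the paper. The paper first invokes Theorem~\ref{he} to see that $\G$ is bipartite distance-regularized with $D=D'=4$, reads off the intersection array of a point ($b_0=r$, $c_1=c_2=c_3=1$, $c_4=r$), and then cites \cite[Theorem~4.2]{FBPS} (the classification of distance-biregular graphs with a side of valency $2$ as subdivisions of cages) together with the fact that an $(r,4)$-cage is $K_{r,r}$. You instead work directly with the SPBIBD axioms: identifying the design with a simple $r$-regular graph $H$ on $\mathcal{P}$, you extract from the non-flag condition with $t=1$ that both $N_H(p)$ and its complement in $\mathcal{P}\setminus\{p\}$ are independent, forcing $H$ to be bipartite and hence equal to $K_{r,r}$. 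Your argument is fully elementary and self-contained, while the paper's argument leans on the structural machinery of distance-biregular graphs; conversely, the paper's route makes the intersection array of $\G$ explicit and ties the result back into the general framework of the article. One small notational slip: when you write $N_H(p):=\G(p)$, you mean the neighbourhood of $p$ in $H$, not in the incidence graph $\G$; this is harmless once noticed. The remaining parts (the direction (ii)$\Rightarrow$(i) and the final $2$-$\mathcal{P}$-homogeneous claim via \cite{FBPS}) match the paper's treatment.
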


\begin{proof}
	Suppose that each block has  size $2$. We observe that $k=2$ and $y=\lambda_1=1$. By Lemma~\ref{cases} we also have that $1=y\leq t <k=2$ and so, $t=1$. By Theorem~\ref{he}, we thus have that $\G$ is a $(\mathcal{P}, \mathcal{B})$-bipartite distance-regularized graph where every point and every block has eccentricity equal to four. Since every vertex has eccentricity $4$ we have that $r>1$. If $r=2$ then $\G$ is a distance-regular graph of diameter $4$ and valency $2$; i.e. $\G$ is the subdivision graph of the complete bipartite graph $K_{2,2}$. Otherwise, $\G$ is a $(\mathcal{P}, \mathcal{B})$-distance-biregular graph. Moreover, the intersection numbers of every point are $b_0=r$, $c_i=1 \; (1\leq i \leq 3)$ and $c_4=r$. It follows from \cite[Theorem~4.2]{FBPS}  that $\G$ is isomorphic to the subdivision graph of an $(r,4)$-cage, i.e. a complete bipartite graph $K_{r,r}=(X,\mathcal{R})$ with $r \geq 2$ and $X=\mathcal{P}$. Conversely, we notice that $\G$ is a $(\mathcal{P},\mathcal{B})$-bipartite graph with $|\mathcal{P}|=2r$ and $|\mathcal{B}|=r^2$ where every point in $\mathcal{P}$ has valency $r$ while every block in $\mathcal{B}$ has valency $2$. To prove our last claim, observe that from \cite[Theorem 4.2]{FBPS}, for all $p\in\mathcal{P}$ and $q \in\G_2(p)$, the sets $\G_{2,2}(p, q)$ are empty. Thus, for all $i \; (2 \leq i \leq 3)$
	and for all $p\in\mathcal{P}$, $q \in\G_2(p)$, and  $z \in\G_{2,2}(p, q)$, the number $|\G_{i-1}(z) \cap \G_{1,1}(x, y)|$ equals $0$. This shows that $\G$ is $2$-$\mathcal{P}$-homogeneous.
\end{proof}

\begin{proposition}\label{P1}
	Let $\G$ denote the incidence graph of a $\mathcal{D}=(\mathcal{P}, \mathcal{B}, \mathcal{I})$  quasi-symmetric $(v,b,r,k,\lambda_1,0)$ SPBIBD of type $(k-1,t)$  with intersection numbers $x=0$ and $y=1$. Assume that $k\geq 3$. Then the following hold:
	\begin{enumerate}[label=(\roman*),font=\rm]
		\item $\G$ is almost $2$-$\mathcal{P}$-homogeneous if and only if $\mathcal{D}$ is a generalized quadrangle.
		\item $\G$ is not $2$-$\mathcal{P}$-homogeneous.
	\end{enumerate}
	\end{proposition}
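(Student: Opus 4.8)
The plan is to reduce both parts to the two scalars $\Delta_2$ and $\Delta_3$ of Definition~\ref{def} and then invoke Theorems~\ref{Ek} and~\ref{Ekk} with $Y=\mathcal{P}$. First I would record the structural set-up. Since $\mathcal{D}$ is quasi-symmetric with intersection numbers $x=0$ and $y=1$, any two distinct blocks meet in $0$ or $1$ points; if some two points lay in $\lambda_1\geq 2$ common blocks, two of those blocks would share at least two points, a contradiction. Hence $\lambda_1=1$ and $\mathcal{D}$ is a partial geometry $(r,k,t)$, which is a generalized quadrangle precisely when $t=1$. By Theorem~\ref{he}, $\G$ is a $(\mathcal{P},\mathcal{B})$-bipartite distance-regularized graph with $D=D'=4$ and the intersection numbers listed there; in particular $c_2=\lambda_1=1$ and $c_2'=y=1$. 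Since $k\geq 3$, every vertex of $\mathcal{B}$ has valency $k\geq 3$, so the hypotheses of Theorems~\ref{Ek} and~\ref{Ekk} are met, with $\min\{D-1,D'-1\}=3$ and $D-2=2$.

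The key observation is that $c_2'-1=y-1=0$, so the term $p^i_{2,i}(c_2'-1)$ in Definition~\ref{def} vanishes and $\Delta_i$ collapses to a single product of intersection numbers; in particular the quantities $p^2_{2,2}$ and $p^3_{2,3}$ of Lemma~\ref{pi2iY} play no role. Reading off the array from Theorem~\ref{he} then gives
\[
\Delta_2=(b_1-1)(c_3-1)=(k-2)(t-1),\qquad \Delta_3=(b_2'-1)(c_4'-1)=(k-y-1)(k-1)=(k-2)(k-1).
\]

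From here both statements follow at once. For (i): by Theorem~\ref{Ekk}, $\G$ is almost $2$-$\mathcal{P}$-homogeneous if and only if $\Delta_2=0$; since $k\geq 3$ forces $k-2\geq 1$, this is equivalent to $t=1$, i.e.\ to $\mathcal{D}$ being a generalized quadrangle. For (ii): by Theorem~\ref{Ek}, $\G$ is $2$-$\mathcal{P}$-homogeneous if and only if $\Delta_2=\Delta_3=0$; but $\Delta_3=(k-2)(k-1)\geq 1\cdot 2>0$ because $k\geq 3$, so $\G$ is never $2$-$\mathcal{P}$-homogeneous.

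The proof presents no genuine difficulty: it is a short computation resting on Theorem~\ref{he} together with Theorems~\ref{Ek} and~\ref{Ekk}. The only points demanding care are the bookkeeping between primed and unprimed intersection numbers, the identification of the parameter ``$k'$'' appearing in Theorems~\ref{Ek}/\ref{Ekk} with the block size $k$ in our setting (so that $k'\ge 3$ is exactly $k\ge 3$), and the simplification $c_2'=y=1$, which is what makes $\Delta_2$ and $\Delta_3$ so transparent.
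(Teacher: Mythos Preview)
Your proof is correct and follows essentially the same route as the paper: compute $\Delta_2(\mathcal{P})=(k-2)(t-1)$ and $\Delta_3(\mathcal{P})=(k-2)(k-1)$ from Theorem~\ref{he} using that $c_2'=y=1$ kills the $p^i_{2,i}$-term, then invoke Theorems~\ref{Ekk} and~\ref{Ek}. Your extra line verifying $\lambda_1=1$ (so that $t=1$ really is equivalent to $\mathcal{D}$ being a generalized quadrangle) is a welcome clarification that the paper leaves implicit.
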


\begin{proof}
	By Theorem~\ref{he}, we observe that $\G$ is a $(\mathcal{P}, \mathcal{B})$-bipartite distance-regularized graph where every point and every block has eccentricity equal to four. Moreover, the intersection arrays of $\G$ can be computed in term of the parameters of $\mathcal{D}$. Therefore, to analyze the (almost) $2$-$\mathcal{P}$-homogeneous condition of $\G$, it suffices to compute the scalars $\Delta_2(\mathcal{P})$ and $\Delta_3(\mathcal{P})$, as defined in Definition~\ref{def}. For $(0\leq i \leq 4)$, let $c_i, b_i$ and $c'_i, b'_i$ denote the intersection numbers of the points and the blocks, respectively, as shown in Theorem~\ref{he}. Since $c_2'=y=1$, it turns out that
	\begin{eqnarray}
		\Delta_2(\mathcal{P})&=&(b_1-1)(c_3-1)=(k-2)(t-1) \label{delta2p}, \\ \Delta_3(\mathcal{P})&=&(b_2'-1)(c_4'-1)=(k-2)(k-1) \label{delta3p}.
	\end{eqnarray}   
By Theorem~\ref{Ekk}, we have that $\G$ is almost $2$-$\mathcal{P}$-homogeneous if and only if $\Delta_2(\mathcal{P})=0$. If $\mathcal{D}$ is a generalized quadrangle then $t=1$ which shows that $\Delta_2(\mathcal{P})=0$. Conversely, if $\Delta_2(\mathcal{P})=0$ then $t=1$, since $k>2$. This means that  $\mathcal{D}$ is a generalized quadrangle if and only if $\Delta_2(\mathcal{P})=0$. So, (i) follows. Moreover, from \eqref{delta3p} the scalar $\Delta_3(\mathcal{P})>0$ and so, by Theorem~\ref{Ek} we have that $\G$ is not $2$-$\mathcal{P}$-homogeneous. 
\end{proof}


\begin{proposition}\label{k3}
	Let $\G$ denote the incidence graph of a $\mathcal{D}=(\mathcal{P}, \mathcal{B}, \mathcal{I})$  quasi-symmetric $(v,b,r,k,\lambda_1,0)$ SPBIBD of type $(k-1,t)$  with intersection numbers $x=0$ and $y>1$. In this case, $\G$ is almost $2$-$\mathcal{P}$-homogeneous if and only if $$k=\frac{(y-1)(r-\lambda_1)(t-1)}{\lambda_1(t-y)}+2.$$
\end{proposition}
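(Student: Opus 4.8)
\textbf{Proof proposal for Proposition~\ref{k3}.} The plan is to mimic the strategy used in Proposition~\ref{P1}: by Theorem~\ref{he}, the incidence graph $\G$ is a $(\mathcal{P},\mathcal{B})$-bipartite distance-regularized graph with $D=D'=4$, so by Theorem~\ref{Ekk} the graph is almost $2$-$\mathcal{P}$-homogeneous if and only if $\Delta_i(\mathcal{P})=0$ for all $i$ with $2\le i\le D-2=2$; that is, if and only if $\Delta_2(\mathcal{P})=0$. So everything reduces to computing $\Delta_2(\mathcal{P})$ explicitly from Definition~\ref{def} in terms of the design parameters and solving $\Delta_2(\mathcal{P})=0$ for $k$.

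First I would record the intersection numbers from Theorem~\ref{he}: for points, $c_2=\lambda_1$, $c_3=t$, $b_1=k-1$, $b_2=r-\lambda_1$, $b_3=k-t$; for blocks, $c_2'=y$, $c_3'=t\lambda_1/y$, $b_1'=r-1$, $b_2'=k-y$. Since $i=2$ is even, Definition~\ref{def} gives
\[
\Delta_2(\mathcal{P})=(b_1-1)(c_3-1)-p^2_{2,2}\,(c_2'-1),
\]
so the main computational step is to evaluate $p^2_{2,2}$ via Lemma~\ref{pi2iY}. For $i=2$ (even), that lemma yields
\[
p^2_{2,2}=\frac{b_2(c_3-1)+c_2(b_1-1)}{c_2}=\frac{(r-\lambda_1)(t-1)+\lambda_1(k-2)}{\lambda_1}.
\]
Substituting this, together with $b_1-1=k-2$, $c_3-1=t-1$, $c_2'-1=y-1$, into the expression for $\Delta_2(\mathcal{P})$ gives
\[
\Delta_2(\mathcal{P})=(k-2)(t-1)-(y-1)\cdot\frac{(r-\lambda_1)(t-1)+\lambda_1(k-2)}{\lambda_1}.
\]
Then I would clear the denominator, group the terms containing $(k-2)$ on one side, and solve the resulting linear equation in $k$. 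Multiplying through by $\lambda_1$ and isolating $(k-2)$ gives
\[
(k-2)\bigl(\lambda_1(t-1)-\lambda_1(y-1)\bigr)=(y-1)(r-\lambda_1)(t-1),
\]
i.e. $(k-2)\lambda_1(t-y)=(y-1)(r-\lambda_1)(t-1)$, so $\Delta_2(\mathcal{P})=0$ is equivalent to
\[
k=\frac{(y-1)(r-\lambda_1)(t-1)}{\lambda_1(t-y)}+2,
\]
as claimed. Here one uses $y>1$ from the hypothesis (so the correction term is nonzero in general) and Lemma~\ref{cases}, which guarantees $t>y$ when $y>1$, so that $t-y>0$ and the division is legitimate; one also uses $\lambda_1>0$, which holds since $\lambda_1\ge 1$ by the standing assumptions on designs, so the denominator never vanishes.

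The only real subtlety — and the step I expect to need the most care — is confirming that Lemma~\ref{pi2iY} applies here, i.e. that the relevant eccentricities are at least $3$ so that $p^2_{2,2}(\mathcal{P})$ is well-defined: this is exactly $D\ge 3$, which holds since $D=4$ by Theorem~\ref{he}, and one needs $2\le i\le D-1=3$, so $i=2$ is in range. Everything else is the routine algebraic manipulation sketched above; there is no case analysis, since the hypothesis $y>1$ already forces $k\ge 4$ and $r\ge 4$ by Theorem~\ref{cases2}, keeping all quantities in the admissible ranges. I would also double-check the edge behavior: if $t=1$ were allowed the formula would collapse, but $y>1$ and Lemma~\ref{cases} force $t>y>1$, so $t-1>0$ and the numerator is genuinely positive, consistent with $k\ge 4>2$.
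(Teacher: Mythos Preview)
Your proof is correct and follows essentially the same route as the paper: invoke Theorem~\ref{he} to get the intersection numbers, compute $p^2_{2,2}(\mathcal{P})$ from Lemma~\ref{pi2iY} and then $\Delta_2(\mathcal{P})$ from Definition~\ref{def}, and reduce via Theorem~\ref{Ekk} to the single equation $\lambda_1(k-2)(t-y)=(y-1)(r-\lambda_1)(t-1)$, using Lemma~\ref{cases} and Theorem~\ref{cases2} to justify dividing by $\lambda_1(t-y)$. If anything, you are slightly more careful than the paper in explicitly checking the hypotheses $D\ge 3$ and $k'\ge 3$ needed to apply Theorem~\ref{Ekk} and Lemma~\ref{pi2iY}.
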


\begin{proof}
	By Theorem~\ref{he}, we observe that $\G$ is a $(\mathcal{P}, \mathcal{B})$-bipartite distance-regularized graph where every point and every block has eccentricity equal to four. Moreover, the intersection arrays of $\G$ can be computed in term of the parameters of $\mathcal{D}$. Therefore, to analyze the (almost) $2$-$\mathcal{P}$-homogeneous condition of $\G$, it suffices to compute the scalars $p^2_{2,2}(\mathcal{P})$, as shown in Lemma~\ref{pi2iY}, and $\Delta_2(\mathcal{P})$, as defined in Definition~\ref{def}. For $(0\leq i \leq 4)$, let $c_i, b_i$ and $c'_i, b'_i$ denote the intersection numbers of the points and the blocks, respectively, as shown in Theorem~\ref{he}. Therefore, by Lemma~\ref{pi2iY} it follows that 
	\begin{equation}\label{eqq1}
		p^2_{2,2}(\mathcal{P})=\frac{(r-\lambda_1)(t-1)+\lambda_1(k-2)}{\lambda_1}. 
	\end{equation}
	Then, by Definition~\ref{def} and \eqref{eqq1} we have that 
		\begin{equation}\label{eqq2}
		\Delta_2(\mathcal{P})=(k-2)(t-1)-(y-1)\frac{(r-\lambda_1)(t-1)+\lambda_1(k-2)}{\lambda_1}. 
	\end{equation}
By Theorem~\ref{Ekk}, we have that $\G$ is almost $2$-$\mathcal{P}$-homogeneous if and only if $\Delta_2(\mathcal{P})=0$.
From \eqref{eqq2}, it is easy to see that $\Delta_2(\mathcal{P})=0$ if and only if 
\begin{eqnarray}\label{eqq3}
	\lambda_1(k-2)(t-y)=(y-1)(r-\lambda_1)(t-1).
\end{eqnarray}
Since $y>1$, by Lemma \ref{cases} and Theorem \ref{cases2} we also observe that $t>y$ and $k\geq 4$. Therefore, from \eqref{eqq3} it is easy to see that $$k-2=\frac{(y-1)(r-\lambda_1)(t-1)}{\lambda_1(t-y)}.$$
The claim now immediately follows from Theorem~\ref{Ekk}. 
\end{proof}

\begin{proposition}\label{k4}
	Let $\G$ denote the incidence graph of a $\mathcal{D}=(\mathcal{P}, \mathcal{B}, \mathcal{I})$  quasi-symmetric $(v,b,r,k,\lambda_1,0)$ SPBIBD of type $(k-1,t)$  with intersection numbers $x=0$ and $y>1$. In this case, $\G$ is $2$-$\mathcal{P}$-homogeneous if and only if $$k=\frac{(y-1)(r-\lambda_1)(t-1)}{\lambda_1(t-y)}+2, \hspace{0.5cm} t=\frac{(k-1)\left[r(y-1)-\lambda_1(k-y-1) \right] }{\lambda_1(y-1)}.$$
\end{proposition}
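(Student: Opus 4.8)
The plan is to apply Theorem~\ref{Ek}. By Theorem~\ref{he}, the incidence graph $\G$ of $\mathcal{D}$ is a $(\mathcal{P},\mathcal{B})$-bipartite distance-regularized graph with $D=D'=4$, and by Theorem~\ref{cases2} (recall $y>1$) the valency of the vertices of $\mathcal{B}$, which is the block size $k$, satisfies $k\geq 4$; in particular there are no vertices of valency $2$. Hence Theorem~\ref{Ek}, applied with $Y=\mathcal{P}$ (so that the scalar denoted $k'$ there is the block valency $k\geq 3$), gives that $\G$ is $2$-$\mathcal{P}$-homogeneous if and only if $\Delta_2(\mathcal{P})=0$ and $\Delta_3(\mathcal{P})=0$, since here $\min\{D-1,D'-1\}=3$. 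By Proposition~\ref{k3}, the condition $\Delta_2(\mathcal{P})=0$ is exactly the first displayed identity $k=\frac{(y-1)(r-\lambda_1)(t-1)}{\lambda_1(t-y)}+2$. So the proposition reduces to showing that $\Delta_3(\mathcal{P})=0$ is equivalent to the second displayed identity.

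For this I would compute $\Delta_3(\mathcal{P})$ directly from the intersection numbers listed in Theorem~\ref{he}. First, by Lemma~\ref{pi2iY} with $i=3$ (odd), and using $b'_3=r-t\lambda_1/y$, $c'_4=k$, $c'_3=t\lambda_1/y$, $b'_2=k-y$ together with $c_2=\lambda_1$, the two contributions carrying the factor $t\lambda_1/y$ combine so that the denominator $y$ cancels, leaving
\[
p^3_{2,3}(\mathcal{P})=\frac{b'_3(c'_4-1)+c'_3(b'_2-1)}{c_2}=\frac{r(k-1)-t\lambda_1}{\lambda_1}.
\]
Substituting this into Definition~\ref{def} with $i=3$ yields
\[
\Delta_3(\mathcal{P})=(b'_2-1)(c'_4-1)-p^3_{2,3}(\mathcal{P})(c'_2-1)=(k-y-1)(k-1)-(y-1)\,\frac{r(k-1)-t\lambda_1}{\lambda_1}.
\]

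Finally I would set $\Delta_3(\mathcal{P})=0$, clear the denominator $\lambda_1>0$ to get $\lambda_1(k-y-1)(k-1)=(y-1)\bigl(r(k-1)-t\lambda_1\bigr)$, and solve this linear relation for $t$ --- which is legitimate because $y>1$ and $\lambda_1\geq 1$, so $\lambda_1(y-1)\neq 0$ --- obtaining precisely $t=\dfrac{(k-1)\left[r(y-1)-\lambda_1(k-y-1)\right]}{\lambda_1(y-1)}$, the second displayed identity. Combined with the equivalence coming from Proposition~\ref{k3}, this proves the claim. None of the steps is genuinely difficult; the points needing a little care are the cancellation that produces the clean formula for $p^3_{2,3}(\mathcal{P})$, and the bookkeeping between the notation of Section~\ref{sec:dbg} (where the block valency plays the role of ``$k'$'') and the parameters of the design --- together with checking that the inequality $k\geq 4$ of Theorem~\ref{cases2} is indeed what licenses the invocation of Theorem~\ref{Ek}.
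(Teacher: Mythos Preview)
Your proof is correct and follows essentially the same approach as the paper: compute $p^3_{2,3}(\mathcal{P})$ from Lemma~\ref{pi2iY}, plug into Definition~\ref{def} to get $\Delta_3(\mathcal{P})$, set it to zero and solve for $t$, then combine with Proposition~\ref{k3} via Theorem~\ref{Ek}. Your version is in fact slightly tidier---you simplify $p^3_{2,3}(\mathcal{P})$ to $\frac{r(k-1)-t\lambda_1}{\lambda_1}$ (the paper leaves it unsimplified), and you explicitly invoke Theorem~\ref{cases2} to verify the hypothesis $k'\ge 3$ needed for Theorem~\ref{Ek}, which the paper's own proof of this proposition glosses over.
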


\begin{proof}
		By Theorem~\ref{he}, we observe that $\G$ is a $(\mathcal{P}, \mathcal{B})$-bipartite distance-regularized graph where every point and every block has eccentricity equal to four. Moreover, the intersection arrays of $\G$ can be computed in term of the parameters of $\mathcal{D}$. Therefore, to analyze the $2$-$\mathcal{P}$-homogeneous condition of $\G$, it suffices to compute the scalars $p^3_{2,3}(\mathcal{P})$, as shown in Lemma~\ref{pi2iY}, and $\Delta_3(\mathcal{P})$, as defined in Definition~\ref{def}. For $(0\leq i \leq 4)$, let $c_i, b_i$ and $c'_i, b'_i$ denote the intersection numbers of the points and the blocks, respectively, as shown in Theorem~\ref{he}. Therefore, by Lemma~\ref{pi2iY} it follows that 
	\begin{equation}\label{eqq11}
		p^3_{2,3}(\mathcal{P})= \frac{\left( r-\frac{t\lambda_1}{y}\right) (k-1)+\frac{t\lambda_1}{y}(k-y-1) }{\lambda_1} . 
	\end{equation}
	Then, by Definition~\ref{def} and \eqref{eqq11} we have that 
	\begin{equation}\label{eqq22}
		\Delta_3(\mathcal{P})=(k-y-1)(k-1)-(y-1)\frac{\left( r-\frac{t\lambda_1}{y}\right) (k-1)+\frac{t\lambda_1}{y}(k-y-1) }{\lambda_1}. 
	\end{equation}
	By Theorems~\ref{Ek} and ~\ref{Ekk}, we have that $\G$ is $2$-$\mathcal{P}$-homogeneous if and only if $\G$ is almost $2$-$\mathcal{P}$-homogeneous and $\Delta_3(\mathcal{P})=0$. From \eqref{eqq22}, it is easy to see that $\Delta_3(\mathcal{P})=0$ if and only if 
	\begin{eqnarray}\label{eqq33}
	r(k-1)-t\lambda_1=\frac{\lambda_1(k-y-1)(k-1)}{y-1}.
	\end{eqnarray}
	Therefore, from \eqref{eqq33} it is easy to see that $$t=\frac{(k-1)\left[r(y-1)-\lambda_1(k-y-1) \right] }{\lambda_1(y-1)}.$$
	The claim now immediately follows from the above comments and Proposition~\ref{k3}. 
\end{proof}

As we delve into our research, propositions \ref{k3} and \ref{k4} have surfaced, revealing a compelling problem that demands further investigation.

\begin{problem}
	Determine the existence  of a $\mathcal{D}=(\mathcal{P}, \mathcal{B}, \mathcal{I})$  quasi-symmetric $(v,b,r,k,\lambda_1,0)$ SPBIBD of type $(k-1,t)$  with intersection numbers $x=0$ and $y > 1$ whose incidence graph is (almost) $2$-$\mathcal{P}$-homogeneous. 
\end{problem}

\subsection{The (almost) $2$-$\mathcal{B}$-homogeneous condition}

This section investigates the (almost) $2$-$\mathcal{B}$-homogeneous properties of the incidence graph of certain SPBIBDs. Based on Theorem~\ref{he}, this incidence graph is a $(\mathcal{P}, \mathcal{B})$-bipartite distance-regularized graph, with all vertices having  eccentricity equal to four.

\begin{proposition}\label{r2}
	Let $\G$ denote the incidence graph of a $\mathcal{D}=(\mathcal{P}, \mathcal{B}, \mathcal{I})$  quasi-symmetric $(v,b,r,k,\lambda_1,0)$ SPBIBD of type $(k-1,t)$  with intersection numbers $x=0$ and $y=1$. The following \rm{(i)},\rm{(ii)} are equivalent:
	\begin{enumerate}[label=(\roman*),font=\rm]
		\item Each point appears in exactly $2$ blocks.
		\item $\G$ is isomorphic to the subdivision graph of a complete bipartite graph $K_{k,k}$ and $b=2k$.
	\end{enumerate}
	In this case, $\G$ is $2$-$\mathcal{B}$-homogeneous. 
\end{proposition}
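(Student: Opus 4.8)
The plan is to prove Proposition~\ref{r2} as the ``dual'' of Proposition~\ref{k2}: repeat that argument with the roles of $\mathcal{P}$ and $\mathcal{B}$ (equivalently, of $r$ and $k$) interchanged, relying on the structure theory of distance-biregular graphs that possess a vertex of valency $2$.

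First I would prove (i) $\Rightarrow$ (ii). Assume each point lies in exactly two blocks, so $r=2$. By Corollary~\ref{tnotr}, $t<r=2$, hence $t=1$; and since by Theorem~\ref{he} the graph $\G$ is distance-regularized with every block of eccentricity $4$, the intersection number $b'_3=r-t\lambda_1/y=2-\lambda_1$ must be positive, forcing $\lambda_1=1$. Thus $\mathcal{D}$ has parameters $r=2$ and $\lambda_1=t=y=1$, and by Theorem~\ref{he} the point-side intersection numbers of $\G$ are $c_0=0$, $c_1=c_2=c_3=1$, $c_4=2$, $b_0=2$, with $D=D'=4$. If $k=2$, then $\G$ is a bipartite distance-regular graph of diameter $4$ and valency $2$, i.e. the $8$-cycle, which is the subdivision graph of $K_{2,2}$, and $b=4=2k$. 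If $k\geq 3$, then $\G$ is distance-biregular with $\mathcal{P}$-valency $2$; it is not a complete bipartite graph (its diameter is $4$), so by \cite[Corollary~3.5]{MST} it is the subdivision graph of a $(\kappa,g)$-cage $H$. Since $k\geq 3$, the valency-$2$ vertices of this subdivision graph are precisely the vertices obtained by subdividing edges of $H$; as those form the set $\mathcal{P}$, we obtain $\mathcal{B}=V(H)$, $\mathcal{P}=E(H)$, and $H$ is $\kappa=k$-regular. From the intersection numbers above the girth of $\G$ equals $8$, hence $H$ has girth $4$, so $H$ is a $(k,4)$-cage; since $K_{k,k}$ is the unique such cage, $\G$ is isomorphic to the subdivision graph of $K_{k,k}$ and $b=|V(K_{k,k})|=2k$.

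For (ii) $\Rightarrow$ (i): if $k\geq 3$, the two color classes of the subdivision graph of $K_{k,k}$ have sizes $2k$ and $k^2$, so the hypothesis $b=|\mathcal{B}|=2k$ forces $\mathcal{B}$ to be the class of original vertices and $\mathcal{P}$ the class of edge vertices, each of valency $2$; if $k=2$ the subdivision graph is the $8$-cycle, all of whose vertices have valency $2$. In either case every point of $\mathcal{D}$ lies in exactly two blocks. For the homogeneity claim, observe that in the situation of (ii) the graph $\G$ is, when $k\geq 3$, a $(\mathcal{B},\mathcal{P})$-distance-biregular graph whose side $\mathcal{P}$ has valency $r=2$, and by the results of \cite[Section~4]{FBPS} such a graph is $2$-$\mathcal{B}$-homogeneous; when $k=2$, $\G$ is an even cycle, which is $2$-$Y$-homogeneous for either bipartition. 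In either case $\G$ is $2$-$\mathcal{B}$-homogeneous. (Theorems~\ref{Ek} and~\ref{Ekk} cannot be invoked here, since the side $\mathcal{P}$ has valency $r=2<3$, which is why the dedicated valency-$2$ results of \cite{FBPS} are needed.)

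The main obstacle is organisational rather than conceptual: one must carefully track which color class of the subdivision graph plays the role of $\mathcal{P}$ and which plays the role of $\mathcal{B}$ — this is exactly where the hypothesis $b=2k$ and the valency count $r=2$ are used — read off the girth of $\G$ from its intersection array in order to pin $H$ down as $K_{k,k}$ rather than some other $k$-regular graph, and treat separately the degenerate case $k=2$, where $\G$ is distance-regular rather than distance-biregular. Beyond that, the proof transcribes directly from that of Proposition~\ref{k2}.
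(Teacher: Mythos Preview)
Your proof is correct and follows essentially the same approach as the paper, which simply writes ``Similar to the proof of Proposition~\ref{k2}'' and leaves the dualization to the reader. Your write-up carries out that dualization explicitly, with only cosmetic differences in the auxiliary results invoked: you obtain $t=1$ via Corollary~\ref{tnotr} and $\lambda_1=1$ via $b'_3>0$, whereas a literal dual of the Proposition~\ref{k2} argument would get $\lambda_1<r=2$ from $b_2>0$; and you identify $\G$ as the subdivision of a $(k,4)$-cage via \cite[Corollary~3.5]{MST} plus a girth count, whereas Proposition~\ref{k2} cites \cite[Theorem~4.2]{FBPS} directly for the same conclusion. Both routes are valid and already referenced in the paper.
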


\begin{proof}
	Similar to the proof of Proposition~\ref{k2}. 
\end{proof}

\begin{proposition}\label{B1}
	Let $\G$ denote the incidence graph of a $\mathcal{D}=(\mathcal{P}, \mathcal{B}, \mathcal{I})$  quasi-symmetric $(v,b,r,k,\lambda_1,0)$ SPBIBD of type $(k-1,t)$  with intersection numbers $x=0$ and $y=1$. Assume that $r\geq 3$. Then the following hold:
	\begin{enumerate}[label=(\roman*),font=\rm]
		\item $\G$ is almost $2$-$\mathcal{B}$-homogeneous if and only if $\mathcal{D}$ is a generalized quadrangle.
		\item $\G$ is not $2$-$\mathcal{B}$-homogeneous.
	\end{enumerate}
\end{proposition}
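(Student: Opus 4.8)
The plan is to run the argument of Proposition~\ref{P1} with the two bipartition classes $\mathcal{P}$ and $\mathcal{B}$ interchanged. By Theorem~\ref{he}, $\G$ is a $(\mathcal{P},\mathcal{B})$-bipartite distance-regularized graph with $D=D'=4$, whose point and block intersection numbers are the explicit quantities listed there. The first step is a reduction: since $x=0$ and $y=1$, necessarily $\lambda_1=1$, because two points lying in $\lambda_1\ge 2$ common blocks would force any two of those blocks to meet in at least two points, against $y=1$. Hence $\mathcal{D}$ is a partial geometry $(r,k,t)$, and by the remark following the definition of partial geometry the condition ``$\mathcal{D}$ is a generalized quadrangle'' is the same as ``$t=1$''. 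I also record that the hypothesis $r\ge 3$ is exactly what is needed to invoke Theorems~\ref{Ek} and~\ref{Ekk} with $Y=\mathcal{B}$: there the valency ``$k'$'' is that of the vertices in $Y'=\mathcal{P}$, which equals $r$, while the eccentricity of the vertices in $\mathcal{B}$ is $4\ge 3$; this also means the cases $k=2$ and $k=r$ need no separate treatment.

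The one genuine computation is to evaluate $\Delta_2(\mathcal{B})$ and $\Delta_3(\mathcal{B})$ of Definition~\ref{def} for $Y=\mathcal{B}$. Here one must keep the conventions straight: with $Y=\mathcal{B}$ the ``unprimed'' intersection numbers of Definition~\ref{def} are the block numbers $c'_i,b'_i$ of Theorem~\ref{he}, the ``primed'' ones are the point numbers $c_i,b_i$, and the quantity written $c'_2$ in Definition~\ref{def} is the point parameter $c_2=\lambda_1=1$; hence every term $p^i_{2,i}(c'_2-1)$ vanishes. Substituting the Theorem~\ref{he} values with $y=\lambda_1=1$ (so the block numbers give $b'_1=r-1$, $c'_3=t$, and the point numbers give $b_2=r-1$, $c_4=r$), I expect
\begin{eqnarray}
\Delta_2(\mathcal{B})&=&(b'_1-1)(c'_3-1)=(r-2)(t-1),\nonumber\\
\Delta_3(\mathcal{B})&=&(b_2-1)(c_4-1)=(r-2)(r-1).\nonumber
\end{eqnarray}

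With these values the statement follows at once. By Theorem~\ref{Ekk}, $\G$ is almost $2$-$\mathcal{B}$-homogeneous iff $\Delta_2(\mathcal{B})=0$; since $r\ge 3$ gives $r-2\ge 1$, this holds iff $t=1$, i.e.\ iff $\mathcal{D}$ is a generalized quadrangle, which is part~(i). For part~(ii), Theorem~\ref{Ek} gives that $\G$ is $2$-$\mathcal{B}$-homogeneous iff $\Delta_2(\mathcal{B})=\Delta_3(\mathcal{B})=0$, but $\Delta_3(\mathcal{B})=(r-2)(r-1)>0$ because $r\ge 3$, so $\G$ is never $2$-$\mathcal{B}$-homogeneous. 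The only thing requiring care is the primed/unprimed bookkeeping of Definition~\ref{def} after swapping the classes, together with the observation $\lambda_1=1$ that makes the $p^i_{2,i}$ terms disappear; I expect no deeper obstacle, since everything else is mechanical substitution.
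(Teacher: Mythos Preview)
Your proposal is correct and follows essentially the same approach as the paper's own proof: compute $\Delta_2(\mathcal{B})$ and $\Delta_3(\mathcal{B})$ from Definition~\ref{def} with the roles of the bipartition classes swapped, use $\lambda_1=1$ so that the $p^i_{2,i}$ terms vanish, and then appeal to Theorems~\ref{Ek} and~\ref{Ekk}. If anything, you are slightly more explicit than the paper in justifying why $y=1$ forces $\lambda_1=1$ and in checking that the hypothesis $r\ge 3$ matches the ``$k'\ge 3$'' required by those theorems once $Y=\mathcal{B}$.
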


\begin{proof}
	By Theorem~\ref{he}, we observe that $\G$ is a $(\mathcal{P}, \mathcal{B})$-bipartite distance-regularized graph where every point and every block has eccentricity equal to four. Moreover, the intersection arrays of $\G$ can be computed in term of the parameters of $\mathcal{D}$. Therefore, to analyze the (almost) $2$-$\mathcal{B}$-homogeneous of $\G$, it suffices to compute the scalars $\Delta_2(\mathcal{B})$ and $\Delta_3(\mathcal{B})$, as defined in Definition~\ref{def}. For $(0\leq i \leq 4)$, let $c_i, b_i$ and $c'_i, b'_i$ denote the intersection numbers of the points and the blocks, respectively, as shown in Theorem~\ref{he}. Since $c_2'=y=1$, it follows that $c_2=\lambda_1=1$. We thus have that
	\begin{eqnarray}
		\Delta_2(\mathcal{B})&=&(b'_1-1)(c'_3-1)=(r-2)(t-1) \label{delta2b}, \\ \Delta_3(\mathcal{B})&=&(b_2-1)(c_4-1)=(r-2)(r-1) \label{delta3b}.
	\end{eqnarray}   
	By Theorem~\ref{Ekk}, we have that $\G$ is almost $2$-$\mathcal{B}$-homogeneous if and only if $\Delta_2(\mathcal{B})=0$. If $\mathcal{D}$ is a generalized quadrangle then $t=1$ which shows that $\Delta_2(\mathcal{B})=0$. Conversely, if $\Delta_2(\mathcal{B})=0$ then $t=1$, since $r>2$. This means that  $\mathcal{D}$ is a generalized quadrangle if and only if $\Delta_2(\mathcal{B})=0$. So, (i) follows. To prove the second part of our claim, we observe from \eqref{delta3b} that the scalar $\Delta_3(\mathcal{B})>0$ and so, by Theorem~\ref{Ek} we have that $\G$ is not $2$-$\mathcal{B}$-homogeneous.  This finishes the proof. 
\end{proof}

\begin{proposition}\label{k30}
	Let $\G$ denote the incidence graph of a $\mathcal{D}=(\mathcal{P}, \mathcal{B}, \mathcal{I})$  quasi-symmetric $(v,b,r,k,\lambda_1,0)$ SPBIBD of type $(k-1,t)$  with intersection numbers $x=0$ and $y>1$. In this case, $\G$ is almost $2$-$\mathcal{B}$-homogeneous if and only if $$r=\frac{(k-y)(\frac{t\lambda_1}{y}-1)(\lambda_1-1)}{\lambda_1(t-y)}+2.$$
\end{proposition}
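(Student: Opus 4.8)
The plan is to mirror the argument of Proposition~\ref{k3}, interchanging the roles of the two color partitions. First I would invoke Theorem~\ref{he}: $\G$ is a $(\mathcal{P},\mathcal{B})$-bipartite distance-regularized graph in which every vertex has eccentricity $4$, with block intersection numbers $c_0'=0$, $c_1'=1$, $c_2'=y$, $c_3'=t\lambda_1/y$, $c_4'=k$ and $b_0'=k$, $b_1'=r-1$, $b_2'=k-y$, $b_3'=r-t\lambda_1/y$, $b_4'=0$, and point intersection numbers $c_2=\lambda_1$, etc. Since $y>1$, Theorem~\ref{cases2} gives $r\ge 4\ge 3$, so the hypotheses of Theorem~\ref{Ekk} are met with $Y=\mathcal{B}$ (there the ``$k'$'' is the valency $r$ of the points, and $D=4\ge 3$). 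As $D=4$, Theorem~\ref{Ekk} tells us that $\G$ is almost $2$-$\mathcal{B}$-homogeneous if and only if $\Delta_2(\mathcal{B})=0$; hence it suffices to compute $\Delta_2(\mathcal{B})$ in terms of the parameters of $\mathcal{D}$.

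Next I would specialize Lemma~\ref{pi2iY} to the even index $i=2$ and the part $Y=\mathcal{B}$, which (the unprimed symbols of the lemma now being the block intersection numbers) gives
$$p^2_{2,2}(\mathcal{B})=\frac{b_2'(c_3'-1)+c_2'(b_1'-1)}{c_2'}=\frac{(k-y)\left(\frac{t\lambda_1}{y}-1\right)+y(r-2)}{y}.$$
Then, by Definition~\ref{def} applied with $Y=\mathcal{B}$ (so that the ``$c_2'$'' appearing there is the \emph{point} intersection number $c_2=\lambda_1$),
$$\Delta_2(\mathcal{B})=(b_1'-1)(c_3'-1)-p^2_{2,2}(\mathcal{B})(c_2-1)=(r-2)\left(\frac{t\lambda_1}{y}-1\right)-(\lambda_1-1)\,\frac{(k-y)\left(\frac{t\lambda_1}{y}-1\right)+y(r-2)}{y}.$$

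Finally I would set $\Delta_2(\mathcal{B})=0$ and solve for $r$. Clearing the denominator $y$ and collecting the two terms carrying the factor $r-2$ on one side, the combination $\left(\frac{t\lambda_1}{y}-1\right)-(\lambda_1-1)=\frac{\lambda_1(t-y)}{y}$ emerges; since $y>1$ forces $t>y$ by Lemma~\ref{cases} and $\lambda_1\ge 1$, this factor (as well as $y$ and $\lambda_1$) is nonzero, so one may divide through to obtain
$$\Delta_2(\mathcal{B})=0\iff \lambda_1(t-y)(r-2)=(\lambda_1-1)(k-y)\left(\frac{t\lambda_1}{y}-1\right)\iff r=\frac{(k-y)\left(\frac{t\lambda_1}{y}-1\right)(\lambda_1-1)}{\lambda_1(t-y)}+2.$$
Combining this equivalence with the reduction from Theorem~\ref{Ekk} in the first paragraph completes the proof. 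There is no real obstacle beyond careful bookkeeping: the only point that genuinely needs attention is the interchange of primed and unprimed intersection numbers when specializing Lemma~\ref{pi2iY} and Definition~\ref{def} to $Y=\mathcal{B}$, together with verifying that the hypothesis $r\ge 3$ of Theorem~\ref{Ekk} holds (this is where $y>1$ and Theorem~\ref{cases2} enter) and that the denominators $y$, $\lambda_1$ and $t-y$ do not vanish.
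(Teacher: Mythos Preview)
Your proposal is correct and follows essentially the same route as the paper: invoke Theorem~\ref{he} for the intersection numbers, compute $p^2_{2,2}(\mathcal{B})$ via Lemma~\ref{pi2iY} and $\Delta_2(\mathcal{B})$ via Definition~\ref{def}, then reduce almost $2$-$\mathcal{B}$-homogeneity to $\Delta_2(\mathcal{B})=0$ by Theorem~\ref{Ekk} and solve. Your write-up is in fact slightly more careful than the paper's, since you explicitly verify the hypothesis $r\ge 3$ of Theorem~\ref{Ekk} using Theorem~\ref{cases2} and flag the swap of primed and unprimed parameters when passing to $Y=\mathcal{B}$.
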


\begin{proof}
	By Theorem~\ref{he}, we observe that $\G$ is a $(\mathcal{P}, \mathcal{B})$-bipartite distance-regularized graph where every point and every block has eccentricity equal to four. Moreover, the intersection arrays of $\G$ can be computed in term of the parameters of $\mathcal{D}$. Therefore, to analyze the (almost) $2$-$\mathcal{B}$-homogeneous condition of $\G$, it suffices to compute the scalars $p^2_{2,2}(\mathcal{B})$, as shown in Lemma~\ref{pi2iY}, and $\Delta_2(\mathcal{B})$, as defined in Definition~\ref{def}. For $(0\leq i \leq 4)$, let $c_i, b_i$ and $c'_i, b'_i$ denote the intersection numbers of the points and the blocks, respectively, as shown in Theorem~\ref{he}. Therefore, by Lemma~\ref{pi2iY} it follows that 
	\begin{equation}\label{eqqq1}
		p^2_{2,2}(\mathcal{B})=\frac{(k-y)\left( \frac{t\lambda_1}{y}-1\right) +y(r-2)}{y}. 
	\end{equation}
	Then, by Definition~\ref{def} and \eqref{eqq1} we have that 
	\begin{equation}\label{eqqq2}
		\Delta_2(\mathcal{B})=(r-2)\left(\frac{t\lambda_1}{y}-1 \right) -(\lambda_1-1)\frac{(k-y)\left( \frac{t\lambda_1}{y}-1\right) +y(r-2)}{y}. 
	\end{equation}
	By Theorem~\ref{Ekk}, we have that $\G$ is almost $2$-$\mathcal{B}$-homogeneous if and only if $\Delta_2(\mathcal{B})=0$.
	From \eqref{eqqq2}, it is easy to see that $\Delta_2(\mathcal{B})=0$ if and only if 
	\begin{eqnarray}\label{eqqq3}
		\lambda_1(r-2)(t-y)=(k-y)\left( \frac{t\lambda_1}{y}-1\right)(\lambda_1-1).
	\end{eqnarray}
	Since $y>1$, by Lemma \ref{cases} we also observe that $t>y$. Therefore, from \eqref{eqqq3} it is easy to see that $$r-2=\frac{(k-y)(\frac{t\lambda_1}{y}-1)(\lambda_1-1)}{\lambda_1(t-y)}.$$
	The claim now immediately follows from Theorem~\ref{Ekk}. 
\end{proof}

\begin{proposition}\label{k40}
	Let $\G$ denote the incidence graph of a $\mathcal{D}=(\mathcal{P}, \mathcal{B}, \mathcal{I})$  quasi-symmetric $(v,b,r,k,\lambda_1,0)$ SPBIBD of type $(k-1,t)$  with intersection numbers $x=0$ and $y>1$. In this case, $\G$ is $2$-$\mathcal{B}$-homogeneous if and only if $$r=\frac{(k-y)(\frac{t\lambda_1}{y}-1)(\lambda_1-1)}{\lambda_1(t-y)}+2, \hspace{0.5cm} t=\frac{(r-1)\left[k(\lambda_1-1)-y(r-\lambda_1-1) \right] }{\lambda_1(\lambda_1-1)}.$$
\end{proposition}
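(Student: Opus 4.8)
The plan is to follow closely the pattern of Propositions~\ref{k30} and~\ref{k4}. By Theorem~\ref{he}, the incidence graph $\G$ of $\mathcal{D}$ is a $(\mathcal{P},\mathcal{B})$-bipartite distance-regularized graph with all vertices of eccentricity $4$, whose point- and block-intersection numbers $c_i,b_i$ and $c_i',b_i'$ are as listed there. Since $y>1$, Theorem~\ref{cases2} gives $r\ge 4$, so the valency $r$ of the vertices in $\mathcal{P}$ is at least $3$ and Theorems~\ref{Ek} and~\ref{Ekk} may be applied with $Y=\mathcal{B}$, $Y'=\mathcal{P}$ (recall $D=D'=4$, so $\min\{D-1,D'-1\}=3$ while $D-2=2$). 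Together those two theorems say that $\G$ is $2$-$\mathcal{B}$-homogeneous if and only if $\Delta_2(\mathcal{B})=\Delta_3(\mathcal{B})=0$, i.e. if and only if $\G$ is almost $2$-$\mathcal{B}$-homogeneous and $\Delta_3(\mathcal{B})=0$. The first of these is exactly the first displayed equality, by Proposition~\ref{k30}. Hence the whole task reduces to showing that $\Delta_3(\mathcal{B})=0$ is equivalent to the second displayed equality.

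For this I would compute $\Delta_3(\mathcal{B})$ directly from Definition~\ref{def} and Lemma~\ref{pi2iY}. The point to be careful about is that in both of those statements the distinguished color part is the generic ``$Y$'', so for $Y=\mathcal{B}$ the symbols $b_i,c_i$ appearing there denote the block-parameters of $\mathcal{D}$ (the $b_i',c_i'$ of Theorem~\ref{he}), while $b_i',c_i'$ denote the point-parameters (the $b_i,c_i$ of Theorem~\ref{he}); this swap is exactly the one already used in the proof of Proposition~\ref{k30}. With this dictionary, Lemma~\ref{pi2iY} in the (odd) case $i=3$ gives
\[
p^3_{2,3}(\mathcal{B})=\frac{(k-t)(r-1)+t(r-\lambda_1-1)}{y},
\]
and Definition~\ref{def} in the case $i=3$ gives
\[
\Delta_3(\mathcal{B})=(r-\lambda_1-1)(r-1)-(\lambda_1-1)\,\frac{(k-t)(r-1)+t(r-\lambda_1-1)}{y}.
\]
Using the elementary identity $(k-t)(r-1)+t(r-\lambda_1-1)=k(r-1)-t\lambda_1$, one rewrites $\Delta_3(\mathcal{B})=0$ as $y(r-\lambda_1-1)(r-1)=(\lambda_1-1)\bigl(k(r-1)-t\lambda_1\bigr)$, and then solving for $t$ (using that $t>y$ by Lemma~\ref{cases}, and $\lambda_1\ge 2$ — if $\lambda_1=1$ then Proposition~\ref{k30} forces $r=2$, contradicting $r\ge 4$, so the first displayed equality already fails) yields precisely
\[
t=\frac{(r-1)\bigl[k(\lambda_1-1)-y(r-\lambda_1-1)\bigr]}{\lambda_1(\lambda_1-1)}.
\]
Combining this with Proposition~\ref{k30} gives the claimed equivalence.

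I expect the only real difficulty to be clerical: keeping the primed and unprimed intersection numbers of Theorem~\ref{he} straight when the generic ``$Y$''-formulas of Lemma~\ref{pi2iY} and Definition~\ref{def} are specialized to $Y=\mathcal{B}$, and then carrying out the short algebraic reduction of $\Delta_3(\mathcal{B})$ without slips. No new conceptual ingredient is needed beyond what already appears in Propositions~\ref{k3}, \ref{k4} and~\ref{k30}.
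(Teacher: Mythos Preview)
Your proposal is correct and follows essentially the same approach as the paper: compute $p^3_{2,3}(\mathcal{B})$ and $\Delta_3(\mathcal{B})$ from Lemma~\ref{pi2iY} and Definition~\ref{def}, then use Theorems~\ref{Ek} and~\ref{Ekk} together with Proposition~\ref{k30} to reduce the question to $\Delta_3(\mathcal{B})=0$, which is exactly the second displayed formula after the short algebraic simplification you indicate. You are in fact slightly more careful than the paper in two places: you explicitly check the hypothesis $r\ge 3$ needed to invoke Theorems~\ref{Ek}/\ref{Ekk} with $Y=\mathcal{B}$, and you address the edge case $\lambda_1=1$ (where dividing by $\lambda_1-1$ would be illegitimate) by noting it is already excluded via Proposition~\ref{k30}.
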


\begin{proof}
	By Theorem~\ref{he}, we observe that $\G$ is a $(\mathcal{P}, \mathcal{B})$-bipartite distance-regularized graph where every point and every block has eccentricity equal to four. Moreover, the intersection arrays of $\G$ can be computed in term of the parameters of $\mathcal{D}$. Therefore, to analyze the $2$-$\mathcal{B}$-homogeneous condition of $\G$, it suffices to compute the scalars $p^3_{2,3}(\mathcal{B})$, as shown in Lemma~\ref{pi2iY}, and $\Delta_3(\mathcal{B})$, as defined in Definition~\ref{def}. For $(0\leq i \leq 4)$, let $c_i, b_i$ and $c'_i, b'_i$ denote the intersection numbers of the points and the blocks, respectively, as shown in Theorem~\ref{he}. Therefore, by Lemma~\ref{pi2iY} it follows that 
	\begin{equation}\label{eqqqq11}
		p^3_{2,3}(\mathcal{B})= \frac{(k-t)(r-1)+t(r-\lambda_1-1) }{y} . 
	\end{equation}
	Then, by Definition~\ref{def} and \eqref{eqqqq11} we have that 
	\begin{equation}\label{eqqqq22}
		\Delta_3(\mathcal{B})=(r-\lambda_1-1)(r-1)-(\lambda_1-1)\frac{(k-t)(r-1)+t(r-\lambda_1-1) }{y}. 
	\end{equation}
	By Theorems~\ref{Ek} and ~\ref{Ekk}, we have that $\G$ is $2$-$\mathcal{B}$-homogeneous if and only if $\G$ is almost $2$-$\mathcal{B}$-homogeneous and $\Delta_3(\mathcal{B})=0$. From \eqref{eqqqq22}, it is easy to see that $\Delta_3(\mathcal{B})=0$ if and only if 
	\begin{eqnarray}\label{eqqqq33}
		\left[ (k-t)(r-1)+t(r-\lambda_1-1)\right] (\lambda_1-1)=y(r-1)(r-\lambda_1-1).
	\end{eqnarray}
	Therefore, from \eqref{eqqqq33} it is easy to see that $$t=\frac{(r-1)\left[k(\lambda_1-1)-y(r-\lambda_1-1) \right] }{\lambda_1(\lambda_1-1)}.$$
	The claim now immediately follows from the above comments and Proposition~\ref{k30}. 
\end{proof}

In the course of our research, Propositions \ref{k30} and \ref{k40} have brought forth a natural problem that warrants further investigation.

\begin{problem}
	Determine the existence  of a $\mathcal{D}=(\mathcal{P}, \mathcal{B}, \mathcal{I})$  quasi-symmetric $(v,b,r,k,\lambda_1,0)$ SPBIBD of type $(k-1,t)$  with intersection numbers $x=0$ and $y > 1$ whose incidence graph is (almost) $2$-$\mathcal{B}$-homogeneous. 
\end{problem}


\section{Distance-semiregular graphs and SPBIBDs} \label{sec:dsg}

\begin{lemma}\label{lemma20}
	Let $\G$ be a $(Y,Y')$-distance semiregular graph with respect to $Y$. Assume every vertex in $Y$ has eccentricity $D=4$. Let $b_i, c_i \; (0\leq i \leq 4)$ denote the intersection numbers of every vertex in $Y$. Then, $\G$ is the incidence graph of 
	a $(1+\frac{b_0b_1}{c_2}+\frac{b_0b_1b_2b_3}{c_2c_3c_4},b_0+\frac{b_0b_1b_2}{c_2c_3},b_0',b_0, c_2,0)$ SPBIBD of type $(b_1,c_3)$. 
\end{lemma}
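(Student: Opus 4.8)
The plan is to construct the design directly from $\G$ and then check, condition by condition, that it is an SPBIBD with the stated parameters. Put $\mathcal{P}=Y$, $\mathcal{B}=Y'$, and declare a point $p\in\mathcal{P}$ incident with a block $B\in\mathcal{B}$ exactly when $p$ is adjacent to $B$ in $\G$; call this incidence structure $\mathcal{D}=(\mathcal{P},\mathcal{B},\mathcal{I})$. By construction the incidence graph of $\mathcal{D}$ is $\G$ itself, so the whole content of the lemma is that $\mathcal{D}$ is the asserted SPBIBD. Since $\G$ is distance-semiregular with respect to $Y$ it is biregular (as recalled in Section~\ref{sec:dbg}), each vertex of $Y$ having valency $b_0$ and each vertex of $Y'$ having valency $b_0':=b_1+1$; hence every point lies in exactly $b_0$ blocks and every block contains exactly $b_0'$ points, so $\mathcal{D}$ is a $1$-design with replication number $b_0$ and block size $b_0'$. (That $\mathcal{D}$ is nondegenerate, i.e. $v>k>1$, is immediate from $b_i>0$ $(0\le i\le 3)$ and $c_2\le b_0$.)

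Next I would read off $v$ and $b$ from the distance partition around a fixed vertex $x\in Y$. Since $\varepsilon(x)=4$ and $\G$ is connected and bipartite, $X=\G_0(x)\cup\cdots\cup\G_4(x)$ with $Y=\G_0(x)\cup\G_2(x)\cup\G_4(x)$ and $Y'=\G_1(x)\cup\G_3(x)$. Double counting the edges between $\G_i(x)$ and $\G_{i+1}(x)$ gives $|\G_0(x)|=1$, $|\G_1(x)|=b_0$ and $|\G_{i+1}(x)|\,c_{i+1}=|\G_i(x)|\,b_i$ for $1\le i\le 3$, so $|\G_2(x)|=b_0b_1/c_2$, $|\G_3(x)|=b_0b_1b_2/(c_2c_3)$ and $|\G_4(x)|=b_0b_1b_2b_3/(c_2c_3c_4)$. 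Adding the even layers gives $v=|\mathcal{P}|=1+\tfrac{b_0b_1}{c_2}+\tfrac{b_0b_1b_2b_3}{c_2c_3c_4}$, and adding the odd layers gives $b=|\mathcal{B}|=b_0+\tfrac{b_0b_1b_2}{c_2c_3}$.

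For the three defining conditions: two distinct points $p,p'$ lie in the same color part, hence $\partial(p,p')\in\{2,4\}$, and the blocks through both of them are precisely their common neighbours in $\G$, so their number is $c_2(p,p')=c_2$ if $\partial(p,p')=2$ and $0$ if $\partial(p,p')=4$; both cases occur (distance-$2$ pairs because $|\G_2(x)|>0$, distance-$4$ pairs because $\varepsilon(x)=4$), which gives condition (i) with $\lambda_1=c_2$, $\lambda_2=0$. For a flag $(p,B)$, every $q\in B\setminus\{p\}$ is adjacent to $B$, hence at distance $2$ from $p$, hence occurs with $p$ in $\lambda_1$ blocks; there are $|B|-1=b_0'-1=b_1$ of them, so the flag parameter is $b_1$. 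For a non-flag $(p,B)$, bipartiteness together with $\varepsilon(p)=4$ forces $\partial(p,B)=3$, so each $q\in B$ has $\partial(p,q)\in\{2,4\}$, and $q$ occurs with $p$ in $\lambda_1$ (not $\lambda_2$) blocks exactly when $\partial(p,q)=2$; the number of such $q$ is $|\G_2(p)\cap\G(B)|=c_3(p,B)=c_3$, so the non-flag parameter is $c_3$. Hence $\mathcal{D}$ is an SPBIBD of type $(b_1,c_3)$ with the parameters claimed in the statement.

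I do not expect a genuine obstacle here: after the initial identification the proof is a routine computation with the intersection array. The only delicate points are the distance bookkeeping that pins down $\partial(p,B)=3$ for non-flags (and thus $\partial(p,q)\in\{2,4\}$ for $q\in B$), and the observation that both intersection values $c_2$ and $0$ are actually realized — which is precisely where the hypotheses $\varepsilon(x)=4$ and $c_i>0$ $(1\le i\le 4)$ enter, and what ensures $\lambda_1\neq\lambda_2$.
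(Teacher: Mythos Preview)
Your proof is correct and follows essentially the same approach as the paper's: define the incidence structure $\mathcal{D}=(Y,Y',\mathcal{I})$, use biregularity to read off block size and replication number, compute $|Y|$ and $|Y'|$ from the distance partition, and then verify the three SPBIBD conditions via the intersection numbers $c_2$, $b_1$, and $c_3$. You are slightly more explicit than the paper in justifying $\partial(p,B)=3$ for non-flags and in noting that both values $c_2$ and $0$ actually occur, but the argument is structurally identical. (Incidentally, your computation gives replication number $b_0$ and block size $b_0'$, which is correct and agrees with the paper's proof; the lemma statement lists these two parameters in the opposite order, an apparent typo in the paper.)
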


\begin{proof}
	Since $\G$ is a $(Y,Y')$-distance semiregular graph with respect to $Y$, it is $(b_0,b_0')$-biregular where $b_0'$ denotes the valency of a vertex in $Y'$. We next consider the combinatorial incidence structure $\mathcal{D}=(Y, Y', \mathcal{I})$, with point set $Y$, block set $Y'$ and incidence $\mathcal{I}$. We will prove that $\mathcal{D}$ is a SPBIBD. As $\G$ is $(b_0,b_0')$-biregular, it follows every block has size $b_0'$ and every point is contained in $b_0$ blocks. Moreover, since $\G$ is bipartite and every point has eccentricity $4$, for every $p\in Y$ we have that $|Y|=|\G_0(p)|+|\G_2(p)|+|\G_4(p)|$ and $|Y'|=|\G_1(p)|+|\G_3(p)|$. The distance-regularity property around $p$ gives us that 
	\begin{eqnarray}
		|Y|&=&1+\frac{b_0b_1}{c_2}+\frac{b_0b_1b_2b_3}{c_2c_3c_4}, \hspace{1cm} |Y'|=b_0+\frac{b_0b_1b_2}{c_2c_3}.\nonumber 
	\end{eqnarray} 
	
 We next claim that every two points are contained either in $\lambda_1=c_2$ blocks or in $\lambda_2=0$ blocks. To prove our claim, pick $p\in Y$. Note that $p$ has eccentricity $4$ and since $\G$ is bipartite, any other point different from $p$ is at distance $2$ or at distance $4$ from $p$. Let $q\in \G_2(p)$. Then, we need to count the number of common neighbours of $p$ and $q$. We observe that $|\G(p)\cap \G(q)|=c_2$. Now, for $q\in \G_4(p)$ we observe that there is no block containing both $p$ and $q$. This proves our claim.
 
 Next, pick $p\in Y$ and $B\in Y'$. If $p$ is a neighbour of $B$ we observe that  $|\G(B)\cap \G_2(p)|=b_1$. So, if $(p,B)$ is a flag, then the number of points in $B$ which occur with $p$ in $c_2$ blocks is $b_1$. Similarly, if $(p,B)$ is a non-flag, then $B\in \G_3(p)$ and the number of points in $B$ which occur with $p$ in $c_2$ blocks is $|\G(B)\cap \G_2(p)|=c_3$. Therefore,  $\mathcal{D}$ is a SPBIBD of type $(b_1,c_3)$. 
\end{proof}

\begin{theorem}
	There is a one-to-one correspondence between the incidence graph of  SPBIBDs with parameters $(v,b,r,k,\lambda_1,0)$ of type $(k-1,t)$ where $0< t<k$  and distance-semiregular graphs with distance-regularized vertices of eccentricity $4$. 
\end{theorem}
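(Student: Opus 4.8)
The plan is to assemble the one-to-one correspondence from the lemmas already established, treating each direction separately and then verifying that the two constructions are mutually inverse.

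First I would handle the forward direction. Let $\mathcal{D}=(\mathcal{P},\mathcal{B},\mathcal{I})$ be a $(v,b,r,k,\lambda_1,0)$ SPBIBD of type $(k-1,t)$ with $0<t<k$, and let $\Gamma$ be its incidence graph. Lemma~\ref{lemma3} shows that every vertex of $\mathcal{P}$ has eccentricity $4$ in $\Gamma$, and Lemma~\ref{lemma11} shows that $\Gamma$ is distance-semiregular with respect to $\mathcal{P}$, with intersection numbers $b_0=r,\ b_1=k-1,\ b_2=r-\lambda_1,\ b_3=k-t$ and $c_1=1,\ c_2=\lambda_1,\ c_3=t,\ c_4=r$. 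Thus $\Gamma$ is exactly an object of the right-hand class: a distance-semiregular graph whose distance-regularized vertices (those of $\mathcal{P}$) have eccentricity $4$.

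Next I would handle the reverse direction. Let $\Gamma$ be a $(Y,Y')$-distance-semiregular graph with respect to $Y$ in which every vertex of $Y$ has eccentricity $D=4$; write $b_i,c_i$ $(0\le i\le 4)$ for the intersection numbers of the vertices of $Y$ and $b_0'$ for the valency of a vertex of $Y'$. By Lemma~\ref{lemma20}, $\Gamma$ is the incidence graph of the SPBIBD $\mathcal{D}(\Gamma)=(Y,Y',\mathcal{I})$ with block size $b_0'$, replication number $b_0$, with $\lambda_1=c_2$, $\lambda_2=0$, and of type $(b_1,c_3)$. I then need two short checks that $\mathcal{D}(\Gamma)$ has exactly the shape required by the statement. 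Since in a graph distance-semiregular with respect to $Y$ every vertex of $Y'$ has valency $b_1+1$ (recalled in Section~\ref{sec:dbg}), the block size is $k:=b_0'=b_1+1$, so $b_1=k-1$ and $\mathcal{D}(\Gamma)$ is of type $(k-1,t)$ with $t:=c_3$. Moreover $D=4$ forces $c_3>0$ and $b_3>0$, and since $c_3+b_3$ equals the $Y'$-valency $k$ (the relation $c_i+b_i=k'$ for odd $i$), we obtain $0<t<k$. Hence $\mathcal{D}(\Gamma)$ is a $(v,b,r,k,\lambda_1,0)$ SPBIBD of type $(k-1,t)$ with $0<t<k$.

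Finally I would verify that the two assignments $\mathcal{D}\mapsto\Gamma$ and $\Gamma\mapsto\mathcal{D}(\Gamma)$ are inverse to one another, which yields the claimed bijection. Going $\mathcal{D}\mapsto\Gamma\mapsto\mathcal{D}(\Gamma)$ recovers $(\mathcal{P},\mathcal{B},\mathcal{I})$; here one only needs that the parameters output by Lemma~\ref{lemma20} agree with those of $\mathcal{D}$, which is immediate because the formulas $1+b_0b_1/c_2+b_0b_1b_2b_3/(c_2c_3c_4)$ and $b_0+b_0b_1b_2/(c_2c_3)$ are exactly the sphere sizes $|\Gamma_0(p)|+|\Gamma_2(p)|+|\Gamma_4(p)|=v$ and $|\Gamma_1(p)|+|\Gamma_3(p)|=b$ for $p\in\mathcal{P}$, while $r,k,\lambda_1,t$ were matched in the previous paragraph. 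Going $\Gamma\mapsto\mathcal{D}(\Gamma)\mapsto$ (incidence graph of $\mathcal{D}(\Gamma)$) recovers $\Gamma$, since two vertices in opposite colour classes are adjacent in the reconstructed graph if and only if the corresponding point and block form a flag of $\mathcal{D}(\Gamma)$, which is the original adjacency. Passing to isomorphism classes on both sides — and remembering that an incidence graph carries the datum of which colour class is $\mathcal{P}=Y$ — produces the one-to-one correspondence. I expect the only delicate point to be this last bookkeeping, namely that neither round trip loses information; the substantive combinatorial work is already contained in Lemmas~\ref{lemma3}, \ref{lemma11} and~\ref{lemma20}.
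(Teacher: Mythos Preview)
Your proof is correct and follows essentially the same approach as the paper: both directions rest on Lemma~\ref{lemma11} (SPBIBD $\to$ distance-semiregular graph) and Lemma~\ref{lemma20} (distance-semiregular graph $\to$ SPBIBD), together with the observation that $b_0'=b_1+1$ so the type is $(k-1,t)$ with $0<t=c_3<b_0'=k$. The paper's proof is much terser and leaves the mutual-inverse verification implicit, whereas you spell it out carefully; your extra bookkeeping is sound but not a different method.
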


\begin{proof}
By Lemma~\ref{lemma20}, $\G$ is the incidence graph of 
	a $(1+\frac{b_0b_1}{c_2}+\frac{b_0b_1b_2b_3}{c_2c_3c_4},b_0+\frac{b_0b_1b_2}{c_2c_3},b_0',b_0, c_2,0)$ SPBIBD of type $(b_1,c_3)$. Notice that $0<c_3<b_0'$ and $b_0<b_0+\frac{b_0b_1b_2}{c_2c_3}$. The result now immediately follows from Lemma~\ref{lemma11}.  
\end{proof}

\begin{lemma}\label{lemma2bis}
	Let $\G$ be a $(Y,Y')$-bipartite distance-regularized graph with vertices of eccentricity $4$. Let $b_i, c_i; b_i', c_i' \; (0\leq i \leq 4)$ denote the intersection numbers of every vertex in $Y$ and in $Y'$ respectively. Then, $\G$ is the incidence graph of 
	a $(1+\frac{b_0b_1}{c_2}+\frac{b_0b_1b_2b_3}{c_2c_3c_4},b_0+\frac{b_0b_1b_2}{c_2c_3},b_0',b_0, c_2,0)$ SPBIBD of type $(b_1,c_3)$ which is quasi-symmetric with intersection numbers $x=0$ and $y=c_2'$. 
\end{lemma}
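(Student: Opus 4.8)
The plan is to build on Lemma~\ref{lemma20}, which already produces the incidence structure $\mathcal{D}=(Y,Y',\mathcal{I})$ and certifies that it is a $(v,b,r,k,\lambda_1,0)$ SPBIBD of type $(b_1,c_3)$ with $v,b,r,k,\lambda_1$ expressed through the intersection numbers of vertices in $Y$; the only thing left to establish here is the quasi-symmetry statement, namely that any two distinct blocks of $\mathcal{D}$ meet in either $0$ or $c_2'$ points and that both values actually occur. First I would translate ``blocks of $\mathcal{D}$'' back into ``vertices of $Y'$'' and ``$|B\cap B'|$'' into ``$|\Gamma(B)\cap\Gamma(B')|$'', the number of common neighbours in $Y$. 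Since $\G$ is bipartite with all vertices in $Y'$ having eccentricity $D'\in\{3,4\}$ (and in fact $D'=4$ here, since $\G$ is distance-regularized with $D=4$, so $D'\ge 3$ and, being distance-biregular or distance-regular, one checks $D'=4$ as in Lemma~\ref{lemma4}), two distinct blocks $B,B'$ are at distance $2$ or $4$ in $\G$. If $\partial(B,B')=4$ then $B$ and $B'$ have no common neighbour, i.e.\ $|B\cap B'|=0$; if $\partial(B,B')=2$ then the number of common neighbours is exactly $c_2(B)=c_2'$ by distance-regularity around $B'$ (here using that $\G$ is distance-regular around every vertex of $Y'$, which is the extra hypothesis in this lemma compared to Lemma~\ref{lemma20}).

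Next I would argue that both intersection numbers are realized. That $0$ occurs: since $r<b$ (which holds because $b=b_0+\frac{b_0b_1b_2}{c_2c_3}>b_0=r$, as $b_1,b_2>0$), there is a non-flag $(p,B)$, and then $p$ lies in some block $B'$ with $p\notin B$ but $B'\ne B$; if every block met $B$ we could run the argument of Lemma~\ref{block} in the dual to force all blocks pairwise intersecting, contradicting the existence of vertices in $Y$ at distance $4$ from one another (which exist because $D=4$). Alternatively, and more directly: a vertex $p'\in Y$ with $\partial(p,p')=4$ lies on some block $B'\ni p'$, and since $\partial(p,B')\le\partial(p,p')=4$ is odd we get $\partial(p,B')=3$, so $p\notin B'$; choosing $p\in B$ we see $B$ and $B'$ are at distance $4$ (any common neighbour would give $\partial(p,p')\le 2$), hence $B\cap B'=\emptyset$. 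That $c_2'$ occurs: pick any $p\in Y$ and any block $B\in\G_3(p)$ (such a block exists since $D=4$ forces $\G_3(p)\ne\emptyset$); then $c_3(p)=t>0$ points of $B$ lie in $\G_2(p)$, and for such a point $q$ there is a block $B'\ni p,q$, giving $q\in B\cap B'$ and $\partial(B,B')=2$, so $|B\cap B'|=c_2'>0$. Combining the two dichotomy cases with these realizability statements yields that $\mathcal{D}$ is quasi-symmetric with intersection numbers $x=0$, $y=c_2'$.

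The main obstacle I anticipate is not any single step but making sure the two halves fit together cleanly: one must be careful that $c_2(B)$ is genuinely well defined and equals $c_2'$ for \emph{every} pair of blocks at distance $2$, which is exactly what distance-regularity around vertices of $Y'$ buys us and is the reason this lemma needs the full bipartite distance-regularized hypothesis rather than merely distance-semiregularity with respect to $Y$. A secondary point of care is verifying $c_2'>0$ (so that $x=0$ and $y=c_2'$ are genuinely distinct, making the design \emph{properly} quasi-symmetric): this follows from $c_2'=c_2(B)\ge 1$ for blocks at mutual distance $2$, together with the realizability argument above showing such a pair exists. Finally I would record that, conversely, the incidence graph of any such quasi-symmetric SPBIBD is a bipartite distance-regularized graph with the stated intersection numbers by Theorem~\ref{he}, so that $\G\mapsto\mathcal{D}\mapsto(\text{incidence graph of }\mathcal{D})$ returns $\G$ up to isomorphism; this closes the loop and is essentially bookkeeping once Lemma~\ref{lemma20} and the quasi-symmetry claim are in hand.
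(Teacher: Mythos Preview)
Your proposal is correct and follows essentially the same route as the paper: apply Lemma~\ref{lemma20} to get the SPBIBD parameters, then use that vertices of $Y'$ have eccentricity $4$ so that any two distinct blocks lie at distance $2$ or $4$, giving intersection sizes $c_2'$ or $0$ respectively. The paper handles the realizability of both values in one line (``Notice that both $\G_2(B)$ and $\G_4(B)$ are nonempty''), whereas you spell this out more carefully; your final paragraph on the converse direction is not needed for this lemma, as that is the content of Theorem~\ref{correspondence}.
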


\begin{proof}
	We next consider the combinatorial incidence structure $\mathcal{D}=(Y, Y', \mathcal{I})$, with point set $Y$, block set $Y'$ and incidence $\mathcal{I}$. Observe that every bipartite distance-regularized graph with vertices of eccentricity $4$ is either a bipartite distance-regular graph of diameter $4$ or a distance-biregular graph with $D=D'=4$. In particular, it is distance-semiregular with respect to both color partitions.  Therefore, since $\G$ has vertices of eccentricity $4$, by Lemma \ref{lemma20}, $\G$ is the incidence graph of a $(1+\frac{b_0b_1}{c_2}+\frac{b_0b_1b_2b_3}{c_2c_3c_4},b_0+\frac{b_0b_1b_2}{c_2c_3},b_0',b_0, c_2,0)$ SPBIBD of type $(b_1,c_3)$. We next assert that such a $1$-design is quasi-symmetric with $x=0$ and $y=c_2'$. To prove our claim, pick $B\in Y'$. We observe $\G_0(B)\cup\G_2(B)\cup\G_4(B)=Y'$ since vertices of $Y'$ have eccentricity $4$ and $\G$ is bipartite. Let $B' \in \G_2(B)\cup\G_4(B)$. Notice that both $\G_2(B)$ and $\G_4(B)$ are nonempty. If  $B' \in \G_4(B)$ then $B\cap B'=\emptyset$. Suppose now $ B' \in \G_2(B)$. Since $\G$ is distance-regularized we have $|\G(B)\cap \G(B')|=c_2'$. This shows that two intersecting blocks have the same number of points in common. Therefore, $\mathcal{D}$ is quasi-symmetric with $x=0$ and $y=c_2'$.
\end{proof}

\begin{theorem}\label{correspondence}
	There is a one-to-one correspondence between the incidence graph of  quasi-symmetric SPBIBDs with parameters $(v,b,r,k,\lambda_1,0)$ of type $(k-1,t)$ with intersection numbers $x=0$ and $y>0$, where $0< y\leq t<k$ , and bipartite distance-regularized graphs with vertices of eccentricity $4$. 
\end{theorem}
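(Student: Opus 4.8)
The plan is to build the bijection by bolting together the two implications already in hand and then checking that the two constructions are mutually inverse and carry the parameters (and the inequalities $0<y\le t<k$) along correctly. First I would record the forward direction: given a quasi-symmetric $(v,b,r,k,\lambda_1,0)$ SPBIBD $\mathcal{D}$ of type $(k-1,t)$ with intersection numbers $x=0$ and $y>0$, Lemma~\ref{cases} already forces $0<y\le t<k$, so in particular $0<t<k$ and the displayed condition in the statement is never an additional hypothesis; Theorem~\ref{he} then says that the incidence graph $\Gamma(\mathcal{D})$, taken with point class $\mathcal{P}$, is a $(\mathcal{P},\mathcal{B})$-bipartite distance-regularized graph every vertex of which has eccentricity $4$.

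For the reverse direction I would invoke Lemma~\ref{lemma2bis}: a $(Y,Y')$-bipartite distance-regularized graph $\Gamma$ with all vertices of eccentricity $4$ is the incidence graph, with point class $Y$, of a quasi-symmetric SPBIBD $\mathcal{D}(\Gamma,Y)$ of type $(b_1,c_3)$ with intersection numbers $x=0$ and $y=c_2'$, where $b_i,c_i$ (resp.\ $b_i',c_i'$) are the intersection numbers of the vertices in $Y$ (resp.\ $Y'$). Here a short piece of bookkeeping shows that this design has exactly the shape required: its block size is $k:=b_0'$, and since $c_1=1$ while $c_1+b_1$ equals the valency $b_0'$ of a vertex at distance $1$ from $Y$, one gets $b_1=k-1$, so the type is $(k-1,t)$ with $t:=c_3$; moreover $c_2'>0$ and $c_3>0$ because $c_i(z)>0$ whenever $1\le i\le\varepsilon(z)=4$, and $c_3<k$ because $c_3+b_3=b_0'$ with $b_3>0$; finally $y\le t$ then follows from Lemma~\ref{cases} applied to the design just produced, so $0<y\le t<k$ is automatic on this side too.

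Next I would check that $\mathcal{D}\mapsto\Gamma(\mathcal{D})$ and $\Gamma\mapsto\mathcal{D}(\Gamma,Y)$ are inverse to each other, which is essentially a tautology once the bipartitions are matched: $\mathcal{D}(\Gamma(\mathcal{D}),\mathcal{P})$ has point set $\mathcal{P}$, block set $\mathcal{B}$ and incidence equal to adjacency in $\Gamma(\mathcal{D})$, i.e.\ the original incidence, hence equals $\mathcal{D}$; and $\Gamma(\mathcal{D}(\Gamma,Y))$ has vertex set $Y\cup Y'$ and edge set equal to the incidences of $\mathcal{D}(\Gamma,Y)$, i.e.\ the edges of $\Gamma$ joining $Y$ to $Y'$, which is all of $E(\Gamma)$ since $\Gamma$ is $(Y,Y')$-bipartite, hence equals $\Gamma$. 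To confirm that the parameters transport correctly I would substitute the intersection numbers of Theorem~\ref{he} into the formulas of Lemmas~\ref{lemma20} and \ref{lemma2bis} and verify the routine identities $1+\tfrac{b_0b_1}{c_2}+\tfrac{b_0b_1b_2b_3}{c_2c_3c_4}=v$, $b_0+\tfrac{b_0b_1b_2}{c_2c_3}=b$, $b_0=r$, $b_0'=k$, $c_2=\lambda_1$, $c_3=t$, $c_2'=y$.

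The step I expect to need the most care is not a computation but pinning down what ``one-to-one correspondence'' should mean here: because every vertex of such a $\Gamma$ has eccentricity $4$, there is no canonical choice of which colour class is the point set, and choosing $Y$ rather than $Y'$ in Lemma~\ref{lemma2bis} produces the dual design, which by \cite[Theorem~4.39]{SS-QS} together with the computation above is again an admissible quasi-symmetric SPBIBD of the same type yielding the same graph. I would therefore phrase the bijection as one between isomorphism classes of such graphs and isomorphism classes of such designs \emph{considered up to duality} (equivalently, between designs and graphs carrying a distinguished colour class), and state this normalization at the outset so that the two maps above are genuinely well defined and, by the preceding paragraph, mutually inverse.
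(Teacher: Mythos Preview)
Your proposal is correct and follows essentially the same route as the paper: the forward direction via Theorem~\ref{he} and the reverse direction via Lemma~\ref{lemma2bis}, followed by the parameter/inequality checks $0<c_2'\le c_3<b_0'$. The paper's own proof is in fact considerably terser than yours---it simply cites Lemma~\ref{lemma2bis}, asserts the inequality $0<c_2'\le c_3<b_0'$ without argument, and then appeals to Theorem~\ref{he}; your use of Lemma~\ref{cases} to secure $y\le t$ and your explicit check that the two constructions are mutually inverse are welcome additions that the paper omits. Your final paragraph on the duality ambiguity (which colour class is the point set) raises a genuine subtlety that the paper does not address at all; it is a fair refinement of what ``one-to-one correspondence'' should mean here, though for the purposes of matching the paper you could safely drop it.
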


\begin{proof}
	Let $\G$ be a $(Y,Y')$-bipartite distance-regularized graphs with vertices of eccentricity $4$. Let $b_i, c_i; b_i', c_i' \; (0\leq i \leq 4)$ denote the intersection numbers of every vertex in $Y$ and in $Y'$ respectively. Then, by Lemma~\ref{lemma2bis}, $\G$ is the incidence graph of 
	a $(1+\frac{b_0b_1}{c_2}+\frac{b_0b_1b_2b_3}{c_2c_3c_4},b_0+\frac{b_0b_1b_2}{c_2c_3},b_0',b_0, c_2,0)$ SPBIBD of type $(b_1,c_3)$ which is quasi-symmetric with intersection numbers $x=0$ and $y=c_2'$.
	Notice that $0<c_2'\leq c_3<b_0'$ and $b_0<b_0+\frac{b_0b_1b_2}{c_2c_3}$. The result now immediately follows from Lemma~\ref{he}.  
\end{proof}

\begin{theorem}
	Let $\G$ be a $(Y,Y')$-bipartite distance-regularized graph. The following \rm{(i)},\rm{(ii)} are equivalent:
	\begin{enumerate}[label=(\roman*),font=\rm]
		\item $\G$ is $2$-$Y$-homogeneous with $D=4$ and $c'_2=1$.
		\item $\G$ is isomorphic to the subdivision graph of a complete bipartite graph $K_{n,n}=(X,\mathcal{R})$ with $n \geq 2$ and $Y=X$.
	\end{enumerate}
\end{theorem}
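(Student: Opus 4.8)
The plan is to obtain both directions from the correspondence theorems together with the propositions describing the $2$-$\mathcal P$-homogeneous condition, so that the only genuinely new step is a short dichotomy on the eccentricity $D'$ of the vertices in $Y'$. For (ii)$\Rightarrow$(i) I would proceed as follows: writing $\G$ as the subdivision graph of $K_{n,n}=(X,\mathcal R)$ with $n\ge 2$ and $Y=X$, regard it as the incidence graph of the incidence structure $\mathcal D$ whose points are the $2n$ vertices of $K_{n,n}$ and whose blocks are its $n^{2}$ edges, so $\mathcal P=X=Y$. A routine check shows $\mathcal D$ is a quasi-symmetric $(2n,n^{2},n,2,1,0)$ SPBIBD of type $(1,1)$ with $x=0$ and $y=1$: two vertices of $K_{n,n}$ lie in exactly one common edge when they are in different parts and in none otherwise ($\lambda_1=1$, $\lambda_2=0$); for a flag $(p,B)$ the unique other point of $B$ is the one sharing $\lambda_1$ blocks with $p$ ($s=1=k-1$); for a non-flag $(p,B)$ exactly one endpoint of $B$ lies in the part not containing $p$ ($t=1$); and two distinct edges meet in $0$ or $1$ point, both occurring since $n\ge 2$. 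Then Theorem~\ref{he} gives that $\G$ is a $(\mathcal P,\mathcal B)$-bipartite distance-regularized graph with $D=4$ and $c_2'=y=1$, and, since each block has size $2$, Proposition~\ref{k2} gives that $\G$ is $2$-$\mathcal P$-homogeneous, i.e.\ $2$-$Y$-homogeneous; this is (i).

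For (i)$\Rightarrow$(ii), assume $\G$ is $2$-$Y$-homogeneous with $D=4$ and $c_2'=1$. Since $|D-D'|\le 1$ and a vertex of $Y'$ at distance $5$ from another vertex would lie in $Y$ and force $D\ge 5$, we have $D'\in\{3,4\}$. If $D'=4$, every vertex of $\G$ has eccentricity $4$, so by Theorem~\ref{correspondence} the graph $\G$ is the incidence graph of a quasi-symmetric $(v,b,r,k,\lambda_1,0)$ SPBIBD of type $(k-1,t)$ with $x=0$ and $y=c_2'=1$, realized with $\mathcal P=Y$; by Lemma~\ref{cases}, $1=y\le t<k$, so $k\ge 2$. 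Since $\G$ is $2$-$\mathcal P$-homogeneous, Proposition~\ref{P1}(ii) shows that $k\ge 3$ would make $\G$ not $2$-$\mathcal P$-homogeneous, so $k=2$, and then Proposition~\ref{k2} forces $\G$ to be the subdivision graph of $K_{r,r}=(X,\mathcal R)$ with $r\ge 2$ and $X=\mathcal P=Y$, which is (ii) with $n=r$.

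It remains to rule out $D'=3$. In that case $D\ne D'$, so $\G$ is distance-biregular, and being distance-semiregular with respect to $Y'$ with its $Y'$-vertices of eccentricity $3$, by \cite[Theorem~5.2]{FR} it is the incidence graph of a $2$-design $\mathcal S$ with point set $Y'$ and block set $Y$; moreover $\lambda(\mathcal S)=c_2'=1$, so $\mathcal S$ is a Steiner system. If the replication number $k':=b_0'$ of $\mathcal S$ equals $2$, then $v-1=k'(K-1)$ (with $K$ the block size) together with integrality of $b$ forces $\mathcal S$ to be the $2$-$(3,2,1)$ design, whose incidence graph is the $6$-cycle of diameter $3$, contradicting $D=4$; hence $k'\ge 3$. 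Then Theorem~\ref{Ek} applies, and as $\min\{D-1,D'-1\}=2$ it reduces $2$-$Y$-homogeneity to $\Delta_2(Y)=0$; plugging $c_2'=1$ into Definition~\ref{def} gives $\Delta_2(Y)=(b_1-1)(c_3-1)$ with $b_1=k'-1\ge 2$. But for a block $B$ of $\mathcal S$ and a point $p\notin B$, the blocks through $p$ meeting $B$ are exactly the $K$ distinct blocks joining $p$ to the points of $B$, so $c_3=c_3(Y)=K\ge 2$; thus $\Delta_2(Y)\ge 1>0$, a contradiction. Hence $D'=4$, and the previous step finishes the proof.

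I expect the main obstacle to be precisely this elimination of the mixed case $D=4$, $D'=3$, which is not covered by Theorem~\ref{correspondence}: it requires passing to the Steiner-system description of $\G$, computing $c_3(Y)$ and $\Delta_2(Y)$ directly, and separately disposing of the low-valency case $k'=2$, where Theorem~\ref{Ek} is unavailable and one instead uses the forced parameters of a Steiner system with replication number $2$.
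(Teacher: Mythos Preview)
Your proof is correct, and the overall structure---use the SPBIBD correspondence for the $D'=4$ case and treat $D'=3$ separately---matches the paper's. The genuine difference is how the case $D'=3$ is excluded.

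The paper disposes of $D'=3$ in one line by citing \cite[Proposition~5.8]{FR}: if $\G$ is not regular, that external result forces $D=D'=4$ directly. You instead recover this by hand: you pass to the Steiner system $\mathcal S$ (via \cite[Theorem~5.2]{FR}), eliminate $k'=2$ by the arithmetic of $2$-designs with replication number $2$, and then for $k'\ge 3$ compute $\Delta_2(Y)=(b_1-1)(c_3-1)=(k'-2)(K-1)>0$ to contradict Theorem~\ref{Ek}. Both routes are valid; yours is more self-contained (it does not depend on the content of the unspecified Proposition~5.8) at the cost of a longer argument. One small point worth making explicit in your write-up: $\Delta_2(Y)$ is indeed defined in this situation because $\min\{D-1,D'-1\}=2$, so Definition~\ref{def} and Theorem~\ref{Ek} apply exactly at $i=2$.

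In the $D'=4$ case you are in fact slightly cleaner than the paper: you use only Proposition~\ref{P1}(ii) to force $k=2$ and then Proposition~\ref{k2} to finish, whereas the paper invokes both Propositions~\ref{P1},~\ref{B1} and both Propositions~\ref{k2},~\ref{r2}. Since $Y=\mathcal P$ under Theorem~\ref{correspondence}, the $2$-$Y$-homogeneous hypothesis is $2$-$\mathcal P$-homogeneous, so Proposition~\ref{P1} already rules out $k\ge 3$ and the $r=2$ branch is superfluous. Your (ii)$\Rightarrow$(i) via Theorem~\ref{he} and Proposition~\ref{k2} is fine; the paper takes the shorter route of citing \cite{FBPS} directly for the distance-biregular structure and the $2$-$Y$-homogeneity of subdivision graphs with $k'=2$.
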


\begin{proof}
	Suppose that $\G$ is $2$-$Y$-homogeneous with $D=4$ and $c'_2=1$. We notice that every vertex in $Y'$ has eccentricity $4$. In fact, if $\G$ is regular then $\G$ is distance-regular and so, every vertex has the same eccentricity. Otherwise, by \cite[Proposition 5.8]{FR} we have that $D=D'=4$. By Theorem~\ref{correspondence}, we therefore have that $\G$ is the incidence graph of a quasi-symmetric $(v,b,r,k,\lambda_1,0)$ SPBIBD of type $(k-1,t)$ with intersection numbers $x=0$ and $y=1$. We observe that either $k\leq 2$ or $r\leq 2$ as otherwise, by Propositions~\ref{P1} and \ref{B1}, we have that $k\geq 3$ and $r\geq 3$ contradicts the fact that  $\G$ is $2$-$Y$-homogeneous. Moreover, since $\G$ is connected and $D=D'=4$, either $k=2$ or $r=2$. Then, by Propositions~\ref{k2} and ~\ref{r2} we have that $\G$ is isomorphic to the subdivision graph of a complete bipartite graph $K_{n,n}=(X,\mathcal{R})$ with $n \geq 2$ and $Y=X$. Conversely, if $\G$ is isomorphic to the subdivision graph of a complete bipartite graph $K_{n,n}=(X,\mathcal{R})$ with $n \geq 2$ and $Y=X$, then $\G$ is a $(Y,Y')$-bipartite distance-regularized graph with vertices of valency $2$, $D=4$ and $c'_2=1$; see for instace \cite[Theorem 2.6]{FBPS}. If $\G$ is regular then it is isomorphic to a cycle of lenth $8$, which is $2$-$Y$-homogeneous with $D=4$ and $c'_2=1$. Otherwise, $\G$ is distance-biregular with $k'=2$, which is $2$-$Y$-homogeneous by \cite[Theorem 4.2]{FBPS}.
\end{proof}




{\small
	\bibliographystyle{references}
	\bibliography{almost2homogDBRG}
}

\end{document}